\numberwithin{equation}{section}
\renewcommand{\thefigure}{\thefigure.\arabic{equation}}
\numberwithin{figure}{section}
\title{Laplacian Matrices}
\date{ }
\theoremstyle{plain}
\newtheorem{teo}{Theorem}[section]
\newtheorem{prop}[teo]{Proposition}
\newtheorem{lema}[teo]{Lemma}
\newtheorem{coro}[teo]{Corollary}
\newtheorem{defin}[teo]{Definition}
\newtheorem{expl}[teo]{Example}
\theoremstyle{definition}
\newtheorem{obs}[teo]{Remark}
\newcommand{\Rr}{\mathbb{R}}
\newcommand{\CG}{\mathcal{G}}
\newcommand{\CC}{\mathcal{C}}
\newcommand{\CE}{\mathcal{E}}
\newcommand{\CT}{\mathcal{T}}
\newcommand{\CM}{\mathcal{M}}
\newcommand{\tq}{\, | \,}
\begin{document}

\title{Synchrony patterns in Laplacian networks} 


\date{}
\maketitle

\vspace*{-1cm}
\centerline{\scshape Tiago de Albuquerque Amorim\footnote{Email address: tiagoamorim8@usp.br
 }}
{\footnotesize \centerline{Mathematics Department, ICMC}
\centerline{University of S\~ao Paulo} \centerline{13560-970 P.O. box
668, S\~ao Carlos, SP - Brazil }}
\vspace*{0,5cm}
\centerline{\scshape Miriam Manoel\footnote{Email address:
miriam@icmc.usp.br (corresponding author)}}
{\footnotesize \centerline{Mathematics Department, ICMC}
\centerline{University of S\~ao Paulo} \centerline{13560-970 P.O. box 668, S\~ao Carlos, SP - Brazil }}

\vspace*{1cm}

\begin{abstract}

A network of coupled dynamical systems is represented by a graph whose vertices represent individual cells and whose edges represent couplings between cells. Motivated by the impact of synchronization results of the Kuramoto networks,  we introduce the generalized class of Laplacian networks, governed by mappings whose Jacobian at any point is a symmetric matrix with row entries summing to zero. By recognizing this matrix with a weighted Laplacian  of the associated graph,  we derive the optimal estimates of its positive, null and negative eigenvalues directly from the graph topology. Furthermore, we provide a characterization of the mappings that define Laplacian networks. Lastly, we discuss stability of equilibria inside synchrony subspaces for two types of Laplacian network on a ring with some extra couplings. 

\vspace*{0,5cm}

\noindent {\bf Keywords.} network, admissible mapping, graph Laplacian, singularity, synchrony, stability. 

\noindent {\bf AMS classification.} 34C15, 37G40, 82B20, 34D06.
\end{abstract}

\section{Introduction} \label{sec:introduction}
About fifty years ago Kuramoto introduced in \cite{Kuramoto1975} a system of ordinary differential equations  that has since gained recognition as the simple (or traditional) Kuramoto model. It has been proposed as a straightforward and solvable framework to comprehend mutual synchronization within a cluster of oscillators that are equally coupled to all other oscillators. Its significance is further underscored by its applications across various synchronization scenarios in chemical \cite{CBV}, biological \cite{VHMP}, social systems  and in neuroscience phenomena \cite{NB}. 
Since then, numerous related articles have emerged in the literature in distinct contexts of investigation, varying from  numerical analysis and stochastic methods \cite{KJA} to criteria for the existence and stability of (clusters of) synchrony \cite{Bronski2}. In this paper we introduce a more comprehensive framework for this particular type of  system of coupled identical individual systems, accounting for two distinct aspects: 
\begin{itemize}
\item the coupling may not necessarily be universally applied across all elements;  
\item the Jacobian of the network vector field is a general Laplacian matrix.
\end{itemize}
As we shall see, the second requires that being bidirected is a necessary condition for the associated graphs. We remark that the second aspect   appears naturally in gradient systems of coupled cell networks with an extra ${\bf S}^1$-invariance condition on the vector field components; see \cite{MR}, where critical points are investigated for cells coupled in a ring. In fact, we prove that vector fields with a Laplacian structure are gradient; see Theorem~\ref{corograd}.  \\

Recall from algebraic graph theory that eigenvalues of a weighted graph Laplacian  matrix  hold substantial information of the graph. For instance, it follows directly from the definition that zero is an eigenvalue, associated with the eigenvector $(1,\ldots,1)$. This condition is already interesting from the point of view of the local dynamics, as it implies that there are no isolated singularities. Furthermore,
in the case of non-negative weights, it is well known that all the eigenvalues are non-negative and the algebraic multiplicity of the zero eigenvalue is exactly the number of connected components of the graph. This result has been extended in \cite{Bronski}  to the case of connected graphs allowing  negative weights. Since it is expected that the Jacobian matrix of the vector field has zero entries, recognizing this matrix as the graph Laplacian naturally leads us to the inherent condition that the graph may be disconnected. With the requirement to address this condition, we have interpreted the absence of an edge connecting two vertices as the existence of an edge with zero weight to extend the result  to disconnected graphs; see Theorem~\ref{Bronskiteo}. The result is the optimal estimate for the number of negative, null and positive eigenvalues from the number of connected components of the subgraph determined by positive weights and the subgraph determined by negative weights. \\











In the study of local dynamics, specially in bifurcation theory, the starting point is the analysis of existence and nature of equilibria and periodic orbits. An equilibrium is a critical point of the vector field expected to lye on subspaces that are invariant under the dynamics. In equivariant dynamics, these are fixed-point subspaces of subgroups of the symmetry group; in coupled networks, these are  synchrony subspaces determined from the associated graph. Synchrony subspaces are robust polysynchronous subspaces; that is, polysynchronous subspaces (determined by a set of equalities of the vector components -- the synchronized components) that are invariant under any vector field  admissible for the graph.  For a comprehensive study of the formalism of these two contexts,  we refer to \cite{GSS} and \cite{livro}, respectively. We now raise the issue regarding a connection between synchrony and symmetries of the automorphism group of the network graph. Not all  polysynchronous subspaces are robust; not all robust polysynchronous (synchrony) subspaces are fixed-point subspaces of symmetries. In fact, even for regular graphs, there are synchronies that do not come from symmetries of the graph automorphisms, and this has been named {\it exotic} synchrony by the authors in \cite{AntoneliStewart}. We prove that, for regular networks, all polysynchrony subspaces  are generically robust polysynchronous  (Proposition~\ref{prop:inv polydiagonal}). Furthermore,  we present two classes of regular graphs for which synchrony and symmetry coincide. One is the class of  regular rings (any number of cells)  and the other is the class of the so-called $G_n$-graphs ($5 \leq n \leq 9$ and $n = 11$); see Propositions~\ref{prop:ring} and  \ref{prop:Gm}. $G_n$ is the graph of $n$ cells connected by edges with nearest and next nearest neighbors. Subsection~\ref{subsec:G6} presents the study of singularities of a Kuramoto network with coupling determined by the graph $G_6$. \\

As the first step to extend the simple Kuramoto model, we characterize all vector fields whose Jacobian at any point is a Laplacian matrix (Theorem \ref{them:general form}). We also deduce that these belong to a subset of gradient fields, namely those whose potential function operates on the differences between variables. In particular, it follows that the dynamics is completely understood from the study of critical points, from the LaSalle's principle of invariance. The results on stability of critical points then follows from an investigation of signs of the Laplacian matrix eigenvalues that can be deduced from Theorem~\ref{Bronskiteo}. Yet based on the additive nature of Kuramoto systems, in Subsection~\ref{subseq:additive structure} we impose an additive structure on the couplings. Under this condition,  each component of the mapping being odd is a necessary condition (Theorem \ref{them:additive component}); for the Kuramoto network, this is the sine function.  As an advantage, this structure provides a direct way to associate an admissible graph to this mapping, in a unique way,  from the steps given in the formalism of  \cite{AM}. This, in turn, enables us to use graph topology tools to study synchronization. \\

The authors in \cite{JMB} show that for the simple Kuramoto network the total synchrony is an asymptotically stable family of critical points. Here we show that this also holds for the generalized  Laplacian additive networks (Theorem~\ref{maintheo2}). As mentioned above, the authors in \cite{MR} describe the critical points  on synchrony subspaces for gradient networks of identical cells on a ring with the additional condition of $S^1$-invariance in the coupling function. We notice that these turn out to be a particular case of Laplacian additive networks; for the details we refer to \cite[Section~4]{MR}. Here we present the complete list of critical points and their stability estimates for two examples of Laplacian additive network. As a first example, we consider a network with six identical cells with couplings determined by the regular network graph $G_6$, with the coupling function to be the sine function as in the traditional Kuramoto model (Subsection~\ref{subsec:G6}). In the second example we consider a homogeneous network with six identical cells and two types of couplings, namely a combination of the sine and the identity functions.    \\

Let us summarize how this paper is organized. In Section~\ref{sec:preliminaries} we briefly review the network formalism introduced in \cite{SMP}. In Section~\ref{sec:synch and symm}  we review the notions of balanced relation and robust polysynchrony, also introduced in \cite{SMP}, and we discuss about invariant subspaces of a regular graph network dynamics  regarding their connection with synchrony and symmetry.  Section~\ref{sec:Laplacian} is devoted to our two main results on Laplacian networks,  given by Theorems~\ref{Bronskiteo} and \ref{them:general form}, where we also discuss the case of additive structure on the couplings. Finally, in Section~\ref{sec:examples} we deduce Lyapunov stability for totally synchronous equilibria in additive Laplacian networks (Theorem~\ref{maintheo2}) and for two particular examples we study the existence and nature of critical points on the remaining synchrony subspaces. 

\section{Coupled cell networks} \label{sec:preliminaries}

In this section we briefly recall the set up of the network structure of a dynamical system defined by an autonomous system of ordinary differential equations
\begin{equation} \label{eq:ODE}
 \dot{x} \ = \ f(x), \ \ x \in M, 
\end{equation}
where $M$ is a smooth manifold, the {\it state space} or {\it phase space}, and $f$ is a smooth vector field. To simplify exposition, we shall mostly assume $M = {\mathbb R}^n$, so we  consider $f$ to be a map $f : {\mathbb R}^n \to {\mathbb R}^n$, and  write $f(x_1, \ldots, x_n) = \bigl( f_1(x_1, \ldots, x_n), \ldots, f_n(x_1, \ldots, x_n) \bigr),$ but we point out that  in Subsection~\ref{subsec:G6},  for example, $M$ is the 6-torus ${\mathbb T}^6$. 

The network structure appears when a component $f_c$, governing the dynamics of an individual cell $c$, depends on a subset of variables, 
 the {\it input} variables of cell $c$, and is independent of the remaining variables. In this setting, the phase space is a vector space of the form $ P = P_1 \times \cdots \times P_n$, where $x_c \in P_c$ may be multidimensional. Here we assume that the cells are identical, so $P_c$ is the same for all $c$.
We say that cell $c$ is interacting with cells associated with its input set of variables.  The network is then represented by a graph, with the set of vertices representing the set of cells  and the edges representing the interactions. 

More precisely, a {\it coupled cell network} $\CG$ consists of a finite set of cells $\CC=\{1,\ldots,n\}$ and a finite set of arrows (or edges) $\CE \subset \CC \times \CC$, both equipped with equivalence relations $\sim_{\CC}$ and $\sim_{\CE}$, 
 satisfying the compatibility condition
\begin{eqnarray} (c_1,d_1)\sim_{\CE}(c_2,d_2) \Rightarrow  c_1 \sim_{\CC} c_2,  \ d_1 \sim_{\CC} d_2. \nonumber 
\end{eqnarray}
For simplicity we assume that there is no arrow of the form $(c,c)$. Distinct interactions in a coupled cell network define distinct $\sim_{\CE}$-classes of edges, so to each class  $\xi \in \CE/{\sim_{\CE}}$ there corresponds an adjacency matrix $A^{\xi},$ 
\begin{eqnarray} \label{eq:adjacency matrix}
(A^{\xi})_{ij}= \left\{ \begin{array}{c} 1, \mbox{ if } (j,i) \in \xi \\ 0, \mbox{ if } (j,i) \not\in \xi \end{array} \right.  
\end{eqnarray}
If the model requires, we similarly associate a weighted graph to the network, in which case  
\begin{equation} \label{eq:wij}
(A^{\xi})_{ij} = w_{ij}, 
\end{equation}
where $w_{ij}$ denotes the weight of (the class of) the edge $(j,i)$. In the present paper, the interest lies on bidirected graphs; that is, $(i,j)$ is an edge if, and only if $(j,i)$ is an edge of the same type; see the remark after Definition~\ref{def:Laplacian network}. Hence, the adjacency matrices are symmetric. 

We now establish the algebraic set up of a coupled cell network. Let
$$I(c)=\{e \in \CE : \  e=(d,c)\}$$ 
denote the {\it input  set} of a cell $c$. 
Another equivalence relation among cells is given by an identification of input sets: $c$ and $d$ are said {\it input  equivalent} if there exists an arrow-type preserving bijection 
\begin{eqnarray} 
\beta \colon I(c) \rightarrow  I(d), \nonumber 
\end{eqnarray} 
that is, $\beta(e)\sim_{\CE} e,$ for all $e \in I(c).$ Let $B(c, d)$ denote the set of all bijections as above.\\

We end with the two main definitions for the forthcoming sections. 
\begin{defin} \label{def:homog regular}
A network graph $\CG$ is  {\it homogeneous} if all cells are $I$-equivalent, in which case $B(c,d)  \neq \emptyset,$  for all $c,d \in \CC$.  A network graph  is {\it regular} if it is homogeneous with one type of edge.  
\end{defin}

If $I(c) = \{ i_1, \ldots, i_{v} \},$  let us denote  $x_{{I}(c)} = (x_{i_1}, \ldots, x_{i_\nu})$. Then:
\begin{defin} \label{def:adm map}
	A map $f \colon P \to P$ is $\CG$-admissible if:
	\begin{itemize}
		\item [(a)] (Domain condition) For every $c \in \CC$, the component $f_c$  depends only on $x_c$ and on the variables associated with 
  ${I}(c)$. That is, there exists $\hat{f}_c \colon P_c \times P_{I(c)}\rightarrow P_c$ such that
\[		f_c(x)=\hat{f}_c(x_c, x_{I(c)}).\]	
		\item [(b)] (Pullback condition) For every input equivalent pair $c,d \in \CC$ and every $\beta \in 
		B(c,d)$,
\[		\hat{f}_d(x_d, x_{{I}(d)})=\hat{f}_c(x_d, x_{I_{\beta(c)}}). \]
	\end{itemize}
\end{defin}
\noindent We denote the vector space of admissible maps by ${\cal M}({\CG}, P)$, or just ${\cal M}({\CG})$ when $P$ is clear from the context. 

\section{Invariant subspaces in regular networks}  \label{sec:synch and symm}

Subspaces that remain unchanged under the dynamics hold significance. This section focuses on synchrony patterns, as they constitute subspaces of dynamics invariance in the network context. We initiate by briefly revisiting the fundamental concepts.\\

In broad terms, synchrony is a natural concept in network dynamics related to a partition of the cells into subsets (often called clusters) such that all cells in the same cluster are synchronous. If synchrony occurs in the network system (\ref{eq:ODE}), then two or more cells of a solution $x(t)$ behave identically, that is, if $c$ and $d$ are any two of these cells, then 
$$x_c(t)=x_d(t), \, \forall t \in D,$$
where $D$ is a domain in ${\mathbb R}$ for which $x(t)$ exists. The authors in \cite{SMT, SMP} characterize occurrence of synchrony in a network dynamics from the graph architecture. This is given as follows. 

Given a network graph $\CG$, for an edge $e \in {\cal E}$, let $\CT(e)$ denote the vertex given by the tail of  $e$. Let $\bowtie$ be an equivalence relation on $\CC$. Then 
\begin{itemize}
	\item  $\bowtie$ is said to be {\it balanced} if $c \bowtie d$ implies that there exists  $\beta \in B(c,d)$ such that $\CT(e) \bowtie \CT(\beta(e)), \, \forall e \in I(c)$; 
	\item $\bowtie$ is {\it robustly polysynchronous} if for any choice of total phase space $P$ we have $f(\Delta_{\bowtie}) \subseteq \Delta_{\bowtie}$, 
 for all $f \in \CM(\CG)$, where 
 $$\Delta_{\bowtie}=\{x \in P \tq x_c=x_d \Leftrightarrow c \bowtie d\}. $$
\end{itemize}
We notice that the first of the notions above is associated with the graph $\CG$, whereas the second is related to the vector space $\CM(\CG)$, but it turns out that both notions are equivalent, as established in \cite[Theorem 4.3]{SMT}. 
Moreover,  the authors prove that these are also equivalent to
\begin{eqnarray} A_{\CG}^{\xi}(\Delta_{\bowtie}) \subseteq \Delta_{\bowtie}, \, \, \, \forall \xi \in \CE/{\sim_{\CE}}, \ \ {\rm when} \  P = {\mathbb R}^n,  \label{eq: condition to balanced} 
\end{eqnarray} 
that is, synchrony subspaces of ${\mathbb R}^n$ are those left invariant under all adjacency matrices of $\CG$. 

\begin{defin} \label{def:synchrony subspace}
When $\bowtie$ is balanced, $\Delta_{\bowtie}$ is called a {\it synchrony subspace}.  
\end{defin}
The practical way to express the resulting $\bowtie$-classes of a synchrony pattern is coloring vertices: cells in the same class have the same color and `receive from edges' vertices of the same color. 

\begin{obs} \label{rmk: code}{
Based on (\ref{eq: condition to balanced}), the authors in \cite{AguiarDiasAlg} develop, for a given network graph, an algorithm whose output is the set of synchrony subspaces. This is done through polydiagonal invariant subspaces from the eigenvectors of the Jordan decomposition of the adjacency matrices. As they show, it suffices to implement the algorithm for the case of regular graph, because all the  lattice of synchrony subspaces can be associated with a regular network through the notion they introduce of {\it minimal synchrony subspaces that are sum-irreducible}. Following this approach, we have recently implemented this algorithm with a code using the software {\it Mathematica}.  The code has been useful to compute the synchrony patterns presented in \cite{AM} as well as all the synchrony patterns in the present work. }
\end{obs}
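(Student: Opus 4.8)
The assertion is that the Aguiar--Dias procedure applied to the adjacency matrices of $\CG$ returns exactly the synchrony subspaces, and I would take \eqref{eq: condition to balanced} as the bridge between the combinatorial definition (Definition~\ref{def:synchrony subspace}) and a linear-algebraic one. By that equivalence, for $P=\Rr^n$ a polydiagonal $\Delta_\bowtie$ is a synchrony subspace precisely when $A_\CG^\xi(\Delta_\bowtie)\subseteq\Delta_\bowtie$ for every $\xi\in\CE/{\sim_\CE}$; so the objects to be enumerated are the common invariant subspaces of the family $\{A_\CG^\xi\}$ that happen to be polydiagonals. Because there are only finitely many polydiagonals --- one per partition of $\CC$, recorded by its color classes --- soundness and completeness are in principle immediate: test each partition against the balanced condition. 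The content of the remark is therefore not correctness by brute force but two refinements, namely that a spectral construction enumerates these invariant polydiagonals both completely and efficiently, and that it suffices to carry this out on a single regular graph.

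For the first refinement I would make invariance explicit on the characteristic vectors of the partition. Writing $\Delta_\bowtie$ as the span of the indicator vectors $\mathbf 1_K$ of the color classes $K$, invariance requires each $A_\CG^\xi\mathbf 1_K$ to be constant on every color class, and the value $(A_\CG^\xi\mathbf 1_K)_c=\sum_{d\in K}(A_\CG^\xi)_{cd}$ is the weighted number of type-$\xi$ arrows from $c$ into $K$, which recovers the balanced condition directly. Since an invariant polydiagonal is a sum of pieces of the generalized eigenspaces of each $A_\CG^\xi$, the candidates can be assembled from the eigenvectors furnished by the Jordan decomposition, retaining those spans that are genuine polydiagonals. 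Here I would prove that this enumeration misses nothing: although repeated eigenvalues make the invariant subspaces a continuum, the finitely many polydiagonals meet that continuum in a detectable finite set, and closing the retained family under intersection preserves both invariance and the polydiagonal property, so the full collection of invariant polydiagonals is recovered.

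Next I would establish the lattice structure used to organize the output. The intersection of two invariant polydiagonals is again an invariant polydiagonal, so the synchrony subspaces are closed under meet and, together with the top element $\Rr^n$, form a finite lattice whose join of two elements is the smallest synchrony subspace containing both. In a finite lattice every element is a join of join-irreducibles, so it suffices for the algorithm to produce the minimal synchrony subspaces that are sum-irreducible in the sense of \cite{AguiarDiasAlg}. To justify running the procedure on a single regular graph, I would replace the family $\{A_\CG^\xi\}$ by one matrix whose invariant polydiagonals coincide with the common invariant polydiagonals of the family --- for instance a generic real combination $\sum_\xi t_\xi A_\CG^\xi$, or equivalently the adjacency matrix of an associated regular network --- and show that generically its invariant-polydiagonal lattice equals the joint one.

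The main obstacle is precisely this last reduction together with the completeness of the spectral enumeration: one must prove that the generic combination loses no common invariant polydiagonal, and conversely that for generic $t$ no spurious invariant polydiagonals appear, and that the sum-irreducible minimal synchrony subspaces do generate the whole lattice under joins. Establishing these facts rigorously --- rather than the routine verification that each emitted subspace is balanced --- is where the real work of \cite{AguiarDiasAlg} lies, and it is what ultimately certifies that the implemented code returns the complete set of synchrony patterns used in \cite{AM} and in the present paper.
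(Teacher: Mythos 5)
This statement is a Remark, and the paper gives no proof of it: its entire content is a citation of \cite{AguiarDiasAlg}, where the algorithm and its correctness are established, together with the report (not a mathematical claim) that the authors implemented that algorithm in \emph{Mathematica} and used it for the computations in \cite{AM} and in this paper. So there is no proof in the paper to compare yours against; what you have written is a reconstruction of the mathematics behind the cited reference, not of anything argued here.

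As such a reconstruction, your sketch is reasonable but, by your own admission, incomplete exactly where the substance lies: the completeness of the eigenvector/Jordan-decomposition enumeration and the fact that minimal sum-irreducible synchrony subspaces generate the whole lattice are the theorems of \cite{AguiarDiasAlg}, and you defer to them rather than prove them. Two finer points. First, your invariance test via indicator vectors (that $A_{\CG}^{\xi}\mathbf{1}_K$ be constant on colour classes) is correct and is precisely the linear-algebraic restatement of balancedness encoded in (\ref{eq: condition to balanced}). Second, your proposed reduction to a single matrix via a generic combination $\sum_\xi t_\xi A_{\CG}^{\xi}$ is in fact workable for enumeration purposes: for a fixed polydiagonal, invariance under the combination is a linear condition on $t$, and failure of invariance under some $A_{\CG}^{\xi_0}$ makes that condition a proper subspace of the $t$-space, so, the polydiagonals being finite in number, a generic $t$ detects exactly the common invariant ones. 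But this is not the route of \cite{AguiarDiasAlg}: their reduction to regular networks proceeds through the lattice structure and the minimal sum-irreducible synchrony subspaces, not through generic weighted combinations (which in general yield a weighted matrix rather than the adjacency matrix of a regular graph). Within this paper the correct ``proof'' of the Remark is simply the citation; a self-contained argument would require proving the two facts you deferred, which is the content of the cited work.
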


\subsection{Invariant subspaces from synchronies}

In this subsection we relate synchrony subspaces of a network graph $\CG$ to invariant subspaces under the dynamics of (\ref{eq:ODE}) for  $f \in {\cal M}(\CG)$.  As we have  pointed out in \cite{AM}, even for regular graphs a coupled dynamical system may admit invariant polydiagonal subspaces that are not patterns of synchrony. However, the next result shows that generically those are in fact synchrony patterns.

\begin{prop}  \label{prop:inv polydiagonal}
For any regular network graph $\CG$ on any phase space $P$, polydiagonal subspaces of $P$ that are invariant under an admissible map are generically synchrony subspaces. 
\end{prop}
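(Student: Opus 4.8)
The plan is to use the explicit form of admissible maps on a regular graph to convert the invariance of a polydiagonal into a functional identity that holds automatically exactly for balanced relations, and is a nowhere-dense constraint on the map otherwise. Since $\CG$ is regular it is homogeneous with a single edge type (Definition~\ref{def:homog regular}), so all cells share one input valency $v$ and, as $\beta$ ranges over $B(c,c)$, the pullback condition of Definition~\ref{def:adm map} forces the single generating function to be symmetric in its inputs: writing $I(c)=\{i_1,\dots,i_v\}$, every $f\in\CM(\CG)$ has the form $f_c(x)=g(x_c;x_{i_1},\dots,x_{i_v})$ for one fixed $g\colon P\times P^{v}\to P$ invariant under permutations of its last $v$ arguments.

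First I would restrict such an $f$ to a polydiagonal, taken as the linear subspace $\overline{\Delta}_{\bowtie}=\{x\in P : c\bowtie d \Rightarrow x_c=x_d\}$ underlying $\Delta_{\bowtie}$. Parametrizing $\overline{\Delta}_{\bowtie}$ by the common value $y_a\in P$ on each $\bowtie$-class $K_a$, and letting $\mu_c$ denote the multiset of classes of the inputs of $c$, the symmetry of $g$ gives $f_c|_{\overline{\Delta}_{\bowtie}}=g(y_{[c]};\mu_c)$, which depends only on the class $[c]$ and on $\mu_c$. Hence $f(\overline{\Delta}_{\bowtie})\subseteq\overline{\Delta}_{\bowtie}$ is equivalent to the requirement that, for every pair $c\bowtie d$ (so $[c]=[d]=a$), the identity $g(y_a;\mu_c)\equiv g(y_a;\mu_d)$ hold as functions of the free variables $(y_1,\dots,y_m)$. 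When $\bowtie$ is balanced one has $\mu_c=\mu_d$ for all such pairs, these identities hold for every $g$, so $\overline{\Delta}_{\bowtie}$ is invariant under all admissible maps and is a synchrony subspace (Definition~\ref{def:synchrony subspace}); this disposes of the balanced case without any genericity.

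The substance is the converse for non-balanced $\bowtie$. Fixing a pair $c\bowtie d$ with $\mu_c\neq\mu_d$, I would argue that $S_{c,d}=\{\,g : g(\,\cdot\,;\mu_c)\equiv g(\,\cdot\,;\mu_d)\,\}$ is a proper closed linear subspace of the ambient function space of admissible generating maps. Linearity and closedness are immediate, and properness follows from a separating test map: for $g(u;t_1,\dots,t_v)=\sum_{j}h(t_j)$ one gets $g(y_a;\mu_c)-g(y_a;\mu_d)=\sum_k (n^c_k-n^d_k)\,h(y_k)$, with the multiplicity vectors satisfying $(n^c_k)\neq(n^d_k)$, hence nonzero for generic $h$ and generic $y$. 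A proper closed subspace is nowhere dense; since there are only finitely many equivalence relations on the $n$ cells and finitely many pairs within each, the union of the $S_{c,d}$ over all non-balanced $\bowtie$ is nowhere dense, and its complement is residual. For every $f$ whose generating map lies in this complement, each relevant identity fails, so no non-balanced polydiagonal is $f$-invariant; equivalently, every $f$-invariant polydiagonal is balanced, i.e. a synchrony subspace.

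I expect the main obstacle to be making ``generically'' rigorous: one must fix a Baire topological vector space of generating maps---for instance the smooth maps with the Whitney $C^{\infty}$ topology, or the polynomial or real-analytic maps---in which each $S_{c,d}$ is genuinely nowhere dense rather than merely proper, and in which residual sets are dense, so that the finite-intersection argument really produces a generic $f$. A secondary point requiring care is the open-versus-closed reading of $\Delta_{\bowtie}$ implicit in its defining ``$\Leftrightarrow$'': the invariance computation above and the criterion (\ref{eq: condition to balanced}) are cleanest on the linear subspace $\overline{\Delta}_{\bowtie}$, so I would carry out the argument there and then check that restricting to the open stratum of exact equalities does not alter which polydiagonals count as invariant.
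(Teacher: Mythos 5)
Your proposal is correct in substance, but it takes a genuinely different route from the paper. The paper's proof is a one-line linearization argument: since $\CG$ is regular, the Jacobian of any admissible $f$ at a totally synchronous point $\nu=(t,\ldots,t)$ has the form $Jf(\nu)=\alpha(\nu)I+\beta(\nu)A$, where $A$ is the adjacency matrix; every polydiagonal $\Theta$ contains such points and is a linear subspace, so invariance of $\Theta$ under $f$ forces $Jf(\nu)\Theta\subseteq\Theta$, hence $A\Theta\subseteq\Theta$ as soon as $\beta(\nu)\neq 0$ for some $\nu$ --- and that single inequality is the generic condition. Invariance under $A$ is then exactly the characterization \eqref{eq: condition to balanced} of synchrony subspaces, so the paper gets the conclusion by citing that known equivalence, with genericity as an explicit open condition that can be checked in examples (e.g.\ $\beta\equiv 1$ for the Kuramoto system \eqref{eq: kuramoto G6}). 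You instead work nonlinearly: you restrict $f$ to the polydiagonal, turn invariance into the functional identities $g(\,\cdot\,;\mu_c)\equiv g(\,\cdot\,;\mu_d)$ indexed by pairs $c\bowtie d$, and run a Baire-category argument over the space of generating functions, with additive test maps $g(u;t_1,\ldots,t_v)=\sum_j h(t_j)$ certifying that each ``bad'' set $S_{c,d}$ is a proper closed subspace. Note that your test map with $h(t)=t$ \emph{is} the adjacency matrix, so the kernel fact underlying both proofs is the same (non-balanced polydiagonals are not $A$-invariant); the difference is that you reprove the needed direction of \eqref{eq: condition to balanced} rather than invoking it, and you pay for this self-containedness with the function-space topology issues you correctly flag (choosing a Baire topological vector space of generating maps so that proper closed subspaces are nowhere dense), whereas the paper's genericity is a single, concretely verifiable first-order nondegeneracy condition. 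Your reduction of ``balanced'' to equality of input-class multisets $\mu_c=\mu_d$ is valid for regular networks, and your handling of the open-versus-closed reading of $\Delta_{\bowtie}$ matches what the paper does implicitly (the paper's $\nu\in\Theta$ step only makes sense for the closed polydiagonal defined by implications, not equivalences).
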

\begin{proof}
Based on (\ref{eq: condition to balanced}) we assume, without loss of generality, that each cell phase space is one dimensional, so $P = {\mathbb R}^n$, where $n$ is the number of cells. Since $\CG$ is regular, the Jacobian  of an admissible map $f$ at any point of total synchrony $\nu=(t,\ldots,t) \in \Rr^n$ is
\begin{eqnarray}
Jf(\nu)= \alpha (\nu) I +\beta (\nu) A,
\label{eq:inv polydiagonal} 
\end{eqnarray} 
where $A$ is the adjacency matrix of $\CG$. The result now follows observing that $\nu \in \Theta$ for any polydiagonal $\Theta$ and taking the generic condition as $b(\nu) \neq 0$ for some $\nu$. 
\end{proof}
In Subsection~\ref{subsec:G6} we investigate existence and nature of singularities inside synchrony subspaces.  For the ODE (\ref{eq: kuramoto G6}), it is straightforward that $b(\nu) = 1$, for any $\nu \in {\mathbb T}^n$, so the singularities we find are in fact all the possible equilibria inside invariant subspaces, by Proposition~\ref{prop:inv polydiagonal}.

\subsection{Synchrony and symmetry in regular networks}

Existence of symmetries and synchronies force existence of invariant subspaces, because of their fixed-point sets and robustly polysynchronous subspaces, respectively. These are related notions and we now turn to the issue of synchronies with respect to symmetries of a network graph.

A symmetry of $\CG$, or an automorphism of $\CG$, is a permutation $\gamma_{\CC} \colon \CC \rightarrow \CC$ such that 
$$	(c,d) \sim_{\CE} (\gamma_{\CC}(c),\gamma_{\CC}(d)),$$
which also induces a permutation on $\CE$. The group ${\rm Aut}(\CG)$ of all such automorphisms  is the symmetry group of $\CG$.

It is well known that symmetries of a network graph determine synchronies, for every subgroup $\Sigma < {\rm Aut}(\CG)$ defines a balanced equivalence relation on $\CG$ (\cite[Proposition~3.3]{AntoneliStewart}): $$c \bowtie_{\Sigma} d \Leftrightarrow \exists \gamma \in \Sigma : \gamma(c)=d.$$ 
This is the same as saying that the synchrony subspace $\Delta_{\bowtie_{\Sigma}}$ is the fixed-point subspace of $\Sigma$,
$$ {\rm Fix}(\Sigma) = \{ x \in P \ : \ \sigma x = x, \ \forall \sigma \in \Sigma\}.$$ 
As it is also well known, the converse does not hold in general;  that is, not all synchrony subspaces are fixed-point subspaces. Such synchrony has been named  an {\it exotic} synchrony after \cite{AntoneliStewart}. 
a balanced relation $\bowtie$ is called \textit{exotic} if there is no subgroup $\Sigma < {\rm Aut}(\CG)$ for which $\bowtie=\bowtie_{\Sigma}$. 

Exotic patterns can appear in homogeneous graphs and even in the most simple cases of regular graphs. The first example in the literature was presented in \cite{SMM}, the regular graph $G_{12}$ of twelve vertices with nearest and next nearest neighbors. 
Exotic synchronies also appear in  \cite{AM} in the classification of  admissible homogeneous graphs of a class of vector fields on ${\mathbb R}^6$  (see  \cite[Table 2]{AM}). Figure~\ref{fig:exotic} illustrates an exotic pattern of the graph $\CG$ which is number 6  in \cite[Table 2]{AM}). Disregard the colors in Fig.~\ref{fig:exotic} for the picture of $\CG$. We have that   
 ${\rm Aut}(\CG)$ is the group ${\mathbb Z}_2 = \  <(12)(36)(45)>$ and the synchrony pattern is $\{x_1 = x_4, x_2 = x_5, x_3 = x_6\}$, which is clearly exotic.  

\begin{figure}[h] 
\begin{center}
   {\includegraphics[width=4.2cm]{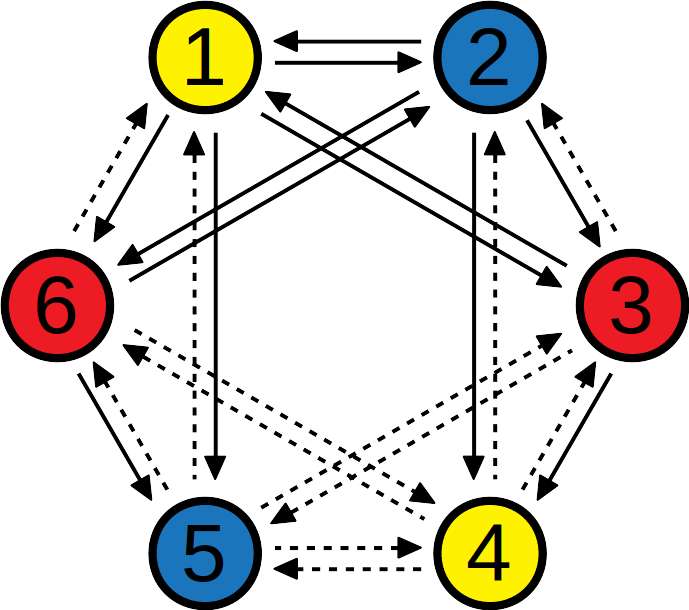}} 
  \end{center}
\caption{A homogeneous graph with six identical cells and two types of edges. The coloring shows an exotic synchrony.} \label{fig:exotic}
\end{figure}

There are however classes of network graphs for which all synchronies are determined by the symmetries  of ${\rm Aut}(\CG)$. For example, the regular network graph ${\rm G}_6$ of six cells with nearest and next nearest neighbor coupling of Subsection~\ref{subsec:G6}, which falls in the set of graphs given in  Proposition~{\ref{prop:Gm}}. The next proposition states that this is also the case for the regular ring  of $n$ cells with nearest neighbor coupling, for which ${\rm Aut}(\CG) = {\mathbb D}_n$. Although the result is not surprising, the proof is not straightforward and, to our knowledge, it has not been found in the literature, so it is included here. 
\begin{prop} \label{prop:ring}
There are no exotic pattern of synchrony for the  regular ring  of $n$ cells,  with nearest neighbor coupling, for $n \geq 3$. 
\end{prop}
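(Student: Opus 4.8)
The plan is to work with the cyclic labelling of cells by $\mathbb{Z}_n$ and to unwind the balanced condition into a purely combinatorial statement about the coloring. Denote by $[k]$ the $\bowtie$-class of cell $k$. Since the ring is regular with a single edge type and the input set $I(k)$ consists of the two edges whose tails are $k-1$ and $k+1$, the set $B(k,l)$ contains both bijections of the two incoming edges, so being arrow-type preserving is automatic. Hence the balanced condition reduces to the following: whenever $[i]=[j]$ one has the multiset equality $\{[i-1],[i+1]\}=\{[j-1],[j+1]\}$. In particular, for each coincidence $[i]=[j]$ at least one of the following holds (both, when $[i-1]=[i+1]$): \textbf{(A)} $[i-1]=[j-1]$ and $[i+1]=[j+1]$, or \textbf{(B)} $[i+1]=[j-1]$ and $[i-1]=[j+1]$.

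First I would record the elementary cancellation fact that $\{a,x\}=\{a,y\}$ as multisets forces $x=y$; this is the engine of the whole argument. Then, from a coincidence $[i]=[j]$ with $i\neq j$, I would manufacture a \emph{color-preserving} automorphism carrying $i$ to $j$. In case (A) the candidate is the rotation $r^{s}$ with $s=j-i$: starting from the two consecutive agreements $[i]=[i+s]$ and $[i+1]=[i+1+s]$, applying the balanced condition at the pair $(k+1,k+1+s)$ together with the cancellation fact upgrades $[k]=[k+s],\,[k+1]=[k+1+s]$ to $[k+2]=[k+2+s]$, so by induction around the cycle $[k]=[k+s]$ for every $k$, i.e.\ $r^{s}$ preserves colors. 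In case (B) the candidate is the reflection $\rho(k)=i+j-k$: the same two-step induction, now comparing $k+1$ with $\rho(k+1)=\rho(k)-1$, yields $[k]=[\rho(k)]$ for all $k$. Both $r^{s}$ and $\rho$ lie in ${\rm Aut}(\CG)={\mathbb D}_n$ and send $i$ to $j$.

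Finally I would let $\Sigma$ be the stabilizer of the coloring, that is, the subgroup of all color-preserving elements of ${\mathbb D}_n$, and verify $\bowtie=\bowtie_{\Sigma}$. The inclusion $\bowtie_{\Sigma}\subseteq\bowtie$ is immediate, since every $\gamma\in\Sigma$ sends each cell to an equally colored cell; the reverse inclusion is exactly what the previous paragraph delivers, because each coincidence $[i]=[j]$ is realized by some $\gamma\in\Sigma$ with $\gamma(i)=j$. Thus $\bowtie=\bowtie_{\Sigma}$ is non-exotic, and as $\bowtie$ was arbitrary the proposition follows. The main obstacle will be the propagation step: one must set the induction on \emph{two} consecutive agreements rather than one, handle the wrap-around around the ring for $n\geq 3$, and treat carefully the degenerate situation where a cell's two neighbors share a color, so that cases (A) and (B) are not mutually exclusive and the matching bijection $\beta$ is not unique.
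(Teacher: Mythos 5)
Your proposal is correct and follows essentially the same route as the paper: both extract the same dichotomy from the balanced condition and propagate two consecutive agreements around the ring (your multiset cancellation step is exactly what makes the paper's implication $c\bowtie d,\ c+1\bowtie d+1 \Rightarrow c+2\bowtie d+2$ work), producing a color-preserving rotation in case (A) and a reflection in case (B). The only difference is organizational: the paper fixes a coincidence at minimal distance $d-c$ and uses it to exhibit generators of $\Sigma$ (deducing $d-c \mid n$ along the way), whereas you take $\Sigma$ to be the full color-preserving subgroup of $\mathbb{D}_n$ and realize each coincidence separately; your packaging dispenses with the minimality/divisibility argument and with the paper's special case of three consecutive equivalent cells, both of which your per-coincidence induction absorbs automatically.
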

\begin{proof} 
If $\bowtie$ is a balanced equivalence relation, we prove that  $\bowtie \ = \ \bowtie_{\Sigma}$ for some subgroup $\Sigma <  \mathbb{D}_n$. 

The following is a necessary condition for the equivalence relation $\bowtie$ to be balanced for this network graph:
\begin{eqnarray}c \bowtie d \quad  \Rightarrow \left\{ \begin{array}{c} (c-1 \bowtie d-1, \, \, c+1 \bowtie d+1) \\ \mbox{ or } \\  (c-1 \bowtie d+1, \, \,  c+1 \bowtie d-1). \end{array} \right.\label{eq: balanced on ring} \end{eqnarray}
First, suppose  that there are three consecutive  $\bowtie$-equivalent cells. By \eqref{eq: balanced on ring}, all cells must be $\bowtie$-equivalent, and so $\bowtie \ = \ \bowtie_{\mathbb{D}_n}$. 

We now suppose $c \bowtie d $ such that 
\begin{eqnarray}
c-1 \bowtie d-1, \, \, c+1 \bowtie d+1. \label{eq: bowtie on ring} \end{eqnarray} 
We need to identify $\Delta_{\bowtie}$ as a fixed-point subspace of  a subgroup  $\Sigma < {\mathbb{D}_n}$, that is,  find a rotation multiple of ${2\pi}/{n}$ or a line reflection generating $\Sigma$.

We can assume that $d-c>0$ is minimal with respect to \eqref{eq: bowtie on ring}. There is a sequence  of $\bowtie$-classes with length $d-c$ that repeats inside the ring. In fact, by \eqref{eq: balanced on ring}, 
$$ c\bowtie d, \  c+1\bowtie d+1 \Rightarrow c+2\bowtie d+2 \Rightarrow \ldots \Rightarrow c +(d-c)=d\bowtie d+(d-c). $$ 
Hence, $d-c$ divides $n$ and then a rotation of  ${2(d-c)\pi}/{n}$  is a generator of $\Sigma$. 

Finally, suppose that there are two cells $c' \bowtie d'$ such that $c<c'<d'<d$ and by minimality of $d-c>0$ we have that $c'-1 \bowtie d'+1, \, \,  c'+1 \bowtie d'-1$. The line reflection that takes $c'$ into $d'$ also takes $c'-1$ into $d'+1$, $c'+1$ into $d'-1$ and so on. Then, this reflection is a generator of $\Sigma$. 
\end{proof}

\begin{obs}
{\rm  
A study of a gradient network of identical oscillators on a ring is carried out in \cite{MR}. 
We notice that in the description given in  \cite[Table~1]{MR} a symmetry in one type of critical points in that table is missing. In fact, inside their list of critical points with trivial isotropy, there is a subclass with nontrivial isotropy, namely  $ {\mathbb Z}_2 < {\mathbb D}_n$, which is supported by Proposition~\ref{prop:ring} above.
Nonetheless, we point out that this does not affect the list of all possible critical points presented in that table, which is in fact complete.    
}
\end{obs}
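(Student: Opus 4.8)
The observation has two parts that I would treat separately: (i) a subclass of the critical points that \cite{MR} records with trivial isotropy in fact carries a reflection symmetry generating a $\mathbb{Z}_2 < \mathbb{D}_n$; and (ii) this correction leaves the completeness of the enumeration in \cite[Table~1]{MR} intact.

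For (i) the plan is to work directly with the closed-form coordinates of the critical points in \cite[Table~1]{MR}. First I would fix the $\mathbb{D}_n$-action on the phase space $\mathbb{T}^n$ of the ring inside the full symmetry group of the problem: the rotation $\rho$ shifts cells, $(\rho x)_j = x_{j-1}$, and the reflection $\kappa$ fixing cell $1$ reverses them, $(\kappa x)_j = x_{2-j}$ with indices taken mod $n$. A configuration $x$ then has $\kappa$ in its isotropy exactly when it is palindromic about that axis, $x_j = x_{2-j}$ for all $j$. I would then isolate the subclass of \cite[Table~1]{MR} whose phase profile is, after a suitable relabelling of the cells, symmetric about some axis, and verify the identity $x_j = x_{2-j}$ termwise from the explicit expressions. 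Producing one such reflection $\kappa$ shows that the isotropy contains $\langle \kappa \rangle \cong \mathbb{Z}_2$, which contradicts the ``trivial isotropy'' entry.

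The role of Proposition~\ref{prop:ring} is to show that this reflection symmetry is forced, not accidental, and to locate it precisely inside $\mathbb{D}_n$. The palindromic relation $x_j = x_{2-j}$ defines an equivalence relation $\bowtie$ on the ring, namely the pairing of each $j$ with $2-j$. This $\bowtie$ is balanced, so by Proposition~\ref{prop:ring} it is non-exotic: $\bowtie \ = \ \bowtie_{\Sigma}$ for some $\Sigma < \mathbb{D}_n$. Since the pairing is generated by the single involution $j \mapsto 2-j$, one has $\Sigma = \langle \kappa \rangle \cong \mathbb{Z}_2$ and the associated synchrony subspace is exactly $\mathrm{Fix}(\langle \kappa \rangle)$. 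As the critical point lies in this fixed-point subspace, $\kappa$ fixes it, and we recover the $\mathbb{Z}_2 < \mathbb{D}_n$ isotropy from the synchrony side. This is the sense in which Proposition~\ref{prop:ring} supports the claim: the overlooked symmetry is precisely the one whose existence the absence of exotic patterns guarantees.

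For (ii) I would argue that the entries of \cite[Table~1]{MR} are obtained by solving the critical-point equations of the gradient, a procedure that is independent of how each solution is afterwards labelled by its isotropy. Upgrading the isotropy of one subclass from trivial to $\mathbb{Z}_2$ only refines the symmetry bookkeeping of solutions that are already present; it neither creates new solutions nor deletes existing ones. Hence the list of critical points is unchanged and remains complete. The main obstacle I anticipate is purely one of matching conventions: pinning down, from the explicit formulas of \cite{MR}, which subclass is palindromic and about which axis, and confirming that the involution realizing the symmetry sits in the spatial factor $\mathbb{D}_n$ rather than being a phase-shifted (twisted) element of the larger symmetry group. Once that is settled, the verification reduces to a direct substitution.
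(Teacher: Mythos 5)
The statement you are addressing is a remark, and the paper offers no written proof of it: the only justification given is the citation of Proposition~\ref{prop:ring}, so the comparison here is against that intended line of reasoning rather than against a step-by-step argument. Within that caveat, your overall plan is sound. Part (i), the direct termwise verification that some subclass of \cite[Table~1]{MR} is palindromic, is a legitimate way to establish the claim when closed-form coordinates are available, and part (ii) --- that upgrading the isotropy label of an already-listed family neither creates nor deletes solutions of the critical-point equations, so completeness is untouched --- is exactly the point the remark makes.

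The weakness is in how you deploy Proposition~\ref{prop:ring}. Once you have checked termwise that $x_j = x_{2-j}$ for all $j$, you have checked literally that $\kappa x = x$, so the conclusion that the isotropy contains $\langle \kappa \rangle \cong \mathbb{Z}_2$ is immediate; the detour in your third paragraph --- the pairing $j \leftrightarrow 2-j$ is balanced, hence non-exotic by the proposition, hence equal to $\bowtie_{\langle \kappa \rangle}$, hence the point lies in ${\rm Fix}(\langle \kappa \rangle)$ --- is circular, because you can only identify $\Sigma = \langle \kappa \rangle$ by already recognizing the relation as the orbit relation of the automorphism $\kappa$, and orbit relations of subgroups of ${\rm Aut}(\CG)$ are non-exotic by definition. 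The proposition does no work there. Its genuine role, and the sense in which the paper says it ``supports'' the remark, is the opposite logical direction: the critical points of \cite{MR} are organized by their patterns of coordinate equalities, and Proposition~\ref{prop:ring} guarantees that on the ring no nontrivial balanced pattern can be exotic; therefore any critical point whose equalities form such a pattern is forced to lie in the fixed-point subspace of a nontrivial subgroup of $\mathbb{D}_n$, and so cannot have trivial isotropy --- with no need to exhibit the reflection explicitly or to manipulate closed-form coordinates at all. You should either drop the appeal to the proposition (your paragraph two already suffices) or restructure so that the proposition, rather than the termwise check, carries the conclusion; as written, the two halves of your argument each make the other redundant.
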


The following result gives another set of regular network graphs for which all synchronies are symmetries:

\begin{prop} \label{prop:Gm}
There are no exotic pattern of synchrony for the  network graph $G_n$, for $5 \leq n \leq 9$ and $n=11$. 
\end{prop}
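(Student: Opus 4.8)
The plan is to establish the statement by a finite, case-by-case verification, one value of $n$ at a time. Since $G_n$ is regular with a single edge type, I would first rewrite the balancedness condition in the form most suited to enumeration: by \eqref{eq: condition to balanced}, an equivalence relation $\bowtie$ is balanced exactly when $\Delta_{\bowtie}$ is invariant under the single adjacency matrix $A$ of $G_n$, namely the circulant matrix with entry $1$ in positions $\pm 1, \pm 2 \pmod n$. Equivalently, in the coloring language, $c \bowtie d$ forces the multiset of $\bowtie$-classes of the four neighbors $\{c-2,c-1,c+1,c+2\}$ to coincide with that of $\{d-2,d-1,d+1,d+2\}$. The goal for each admissible $n$ is then to produce, for every balanced $\bowtie$, a subgroup $\Sigma < \mathrm{Aut}(G_n)$ with $\bowtie = \bowtie_{\Sigma}$; equivalently, to check that the list of balanced relations and the list of orbit partitions $\bowtie_{\Sigma}$ coincide.

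The cases split according to $\mathrm{Aut}(G_n)$. For $n=5$ one has $G_5 = K_5$ with $\mathrm{Aut}(G_5) = S_5$: here every partition is balanced (all neighbor multisets agree on a complete graph) and every partition is the orbit partition of the Young subgroup permuting its blocks, so no exotic pattern can occur. For $n = 6$, $G_6$ is the octahedron $K_{2,2,2}$ and $\mathrm{Aut}(G_6)$ has order $48$; with only six cells the finitely many balanced partitions can be listed directly and each matched to an orbit partition of a subgroup, using that the three antipodal non-adjacent pairs must be respected. For $n \in \{7,8,9,11\}$ the automorphism group is the dihedral group $\mathbb{D}_n$, and the verification is closest in spirit to Proposition~\ref{prop:ring}.

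For the dihedral cases I would adapt the argument of Proposition~\ref{prop:ring}. From $c \bowtie d$ and the four-element multiset condition one extracts propagation alternatives analogous to \eqref{eq: balanced on ring} --- now constrained further by the next-nearest edges --- and argues that a minimal nonzero shift $d-c$ must divide $n$, producing a generating rotation by $2(d-c)\pi/n$, while an additional coincidence obstructing such a shift forces a generating reflection. The presence of the extra $\pm 2$ couplings adds constraints to the multiset matching and simultaneously restricts which shifts are admissible, so the bulk of the work lies in showing, for each of these four values of $n$, that no block structure survives the combined nearest- and next-nearest-neighbor constraints other than genuine rotation or reflection orbits. In practice I would enumerate the balanced partitions with the algorithm of \cite{AguiarDiasAlg} (the code of Remark~\ref{rmk: code}) and confirm agreement with the orbit partitions of the subgroups of $\mathbb{D}_n$.

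The main obstacle is exactly this last verification in the dihedral range. Note that $\mathrm{Aut}(G_n) = \mathbb{D}_n$ holds equally for the admissible and the excluded values, so the distinction cannot come from the size of the symmetry group; it is purely combinatorial. One must rule out \emph{near-miss} partitions that satisfy the local color-multiset condition yet are not orbit partitions of any $\Sigma < \mathbb{D}_n$. Such partitions genuinely exist for neighboring values of $n$ --- most notably the exotic pattern of $G_{12}$ exhibited in \cite{SMM} --- which is precisely why the statement is restricted to $5 \leq n \leq 9$ and $n = 11$ and excludes $n = 10$ and $n = 12$. Thus the proof is not a single uniform argument but a confirmation, value by value, that for these particular $n$ the stronger constraints coming from the next-nearest couplings leave no room for exotic synchrony.
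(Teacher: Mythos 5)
Your proposal is correct and, at its core, is the same as the paper's proof: the paper's entire argument is the finite computational verification you describe in your final step, namely enumerating the balanced relations (via the invariance condition \eqref{eq: condition to balanced} and the algorithm of \cite{AguiarDiasAlg}, implemented in the \emph{Mathematica} code of Remark~\ref{rmk: code}) and matching each one to an orbit partition of a subgroup of ${\rm Aut}(G_n)$. Your additional hand-arguments (the $K_5$ case for $n=5$, the octahedral structure for $n=6$, and the attempted adaptation of Proposition~\ref{prop:ring} to the dihedral cases) are sound but ultimately supplementary, since you correctly concede that the decisive check for $n\in\{7,8,9,11\}$ is the case-by-case computation, which is exactly what the paper does.
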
 
\begin{proof}
This follows by an extensive use of the  {\it Mathematica} code; see Remark~\ref{rmk: code}.
\end{proof}
\noindent The list of exotic patterns of $G_{10}$ is given in \cite[Section 7]{AntoneliStewart}, which we have confirmed with our code. The network graph $G_{12}$ presents an exotic pattern, as already mentioned above. Due to the computational complexity, we have not investigated existence of exotic patterns in $G_n$ for $n \geq 13$ yet.  Some attempts suggest that there may be a general procedure to answer the question for bigger $n$ and we intend to go in this direction soon.

\section{Laplacian networks} \label{sec:Laplacian}
This section is devoted to the main object of this paper, namely the class of networks whose admissible maps are attached to the associated graph not only through its architecture but also through the Laplacian matrix of the graph. As mentioned in the introduction, these generalize the traditional Kuramoto model. 

In Subsection~\ref{subseq:Laplacian matrix} we recall the graph Laplacian matrix and present a result from algebraic graph theory for Laplacian matrices. Subsection~\ref{subsec:Laplacian maps} is dedicated to present the general algebraic expression of an admissible map of a Laplacian network. Finally, the extra admissible additive structure behind the Kuramoto networks is dealt with in Subsection~\ref{subseq:additive structure}. 

\subsection{Graph Laplacian matrix} \label{subseq:Laplacian matrix}
For a bidirected graph $G$ with $n$ vertices, recall that the Laplacian matrix is
\[ L = D - A,\]
where $D$ is the diagonal matrix of the valencies of the vertices and  $A$ is the  adjacency matrix.  The $ii$-entry of $D$ is the degree of the vertex $i$ if $G$ is unweighted,  and ${\sum_{j=1}^n w_{ij}}$ if $G$ is a weighted graph. $A$ is, as usual, the sum  of the adjacency matrices (\ref{eq:adjacency matrix})  (for unweighted edge classes) or (\ref{eq:wij}) (for weighted edge classes). \\

It is direct from the definition that $0$ is an eigenvalue of $L$ with eigenvector $(1, \ldots, 1)$. For weighted connected graphs with possibly negative weights,  the authors in \cite[Theorem 2.10]{Bronski} give the best possible bounds on the numbers of positive, negative and zero eigenvalues. The next result generalizes their result for disconnected graph,  which is used in the next sections. 

Let $G_+$ (resp. $G_-$) denote  the subgraph  with the same vertex set as $G$ together with the edges of positive (resp. negative) weights. Let  $n_0, n_-$ and $n_+$ denote the numbers of zero, negative and positive eigenvalues of the Laplacian $L$, and let  $c(G)$ denote the number of connected components of  $G$. 

\begin{teo} \label{Bronskiteo}
If $G$ is a (possibly disconnected) graph, then  
\begin{eqnarray}
c(G_+) -c(G) \, \,  \leq & n_+(L) & \leq \, \, n - c(G_-) \nonumber \\
c(G_-) -c(G) \, \,  \leq & n_-(L) & \leq \, \, n - c(G_+) \label{eq:estimates} \\
c(G) \, \, \leq & n_0(L) & \leq \, \, n+2c(G)-c(G_+)-c(G_-) . \nonumber 
\end{eqnarray}
\end{teo}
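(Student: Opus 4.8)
The plan is to reduce the disconnected case to the connected one already settled in \cite[Theorem~2.10]{Bronski} by exploiting the block structure of $L$. First I would relabel the vertices so that the ones lying in a common connected component of $G$ are consecutive. Writing $G^{(1)}, \ldots, G^{(k)}$ for the connected components, with $k = c(G)$ and $n_i$ the number of vertices of $G^{(i)}$, this relabelling is a permutation similarity and hence leaves the spectrum and the inertia of $L$ unchanged. Since no edge joins vertices in different components, in these coordinates $L$ becomes block diagonal, $L = L_1 \oplus \cdots \oplus L_k$, where each $L_i$ is exactly the (signed) Laplacian of the connected graph $G^{(i)}$. Consequently the inertia is additive,
\[ n_+(L) = \sum_{i=1}^k n_+(L_i), \qquad n_-(L) = \sum_{i=1}^k n_-(L_i), \qquad n_0(L) = \sum_{i=1}^k n_0(L_i). \]

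Next I would apply the connected result blockwise. Each $G^{(i)}$ is connected, so $c(G^{(i)}) = 1$, and \cite[Theorem~2.10]{Bronski}, which is precisely the present statement specialised to a connected graph, gives for every $i$
\begin{align*}
c\bigl((G^{(i)})_+\bigr) - 1 \ &\leq \ n_+(L_i) \ \leq \ n_i - c\bigl((G^{(i)})_-\bigr), \\
c\bigl((G^{(i)})_-\bigr) - 1 \ &\leq \ n_-(L_i) \ \leq \ n_i - c\bigl((G^{(i)})_+\bigr), \\
1 \ &\leq \ n_0(L_i) \ \leq \ n_i + 2 - c\bigl((G^{(i)})_+\bigr) - c\bigl((G^{(i)})_-\bigr),
\end{align*}
where $(G^{(i)})_\pm$ denotes the positive, respectively negative, subgraph of $G^{(i)}$.

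The final step is to sum these inequalities over $i$ and to identify the resulting quantities with the global data of $G$. The key observation is that every positively weighted edge of $G$ has both endpoints in a single component $G^{(i)}$, so the components of $G_+$ are exactly the disjoint union, over $i$, of the components of $(G^{(i)})_+$, and likewise for $G_-$. Hence the component counts are additive,
\[ \sum_{i=1}^k c\bigl((G^{(i)})_+\bigr) = c(G_+), \qquad \sum_{i=1}^k c\bigl((G^{(i)})_-\bigr) = c(G_-), \]
while trivially $\sum_i n_i = n$ and $\sum_i 1 = k = c(G)$. Substituting these identities into the summed inequalities turns the summed lower bound $\sum_i \bigl( c((G^{(i)})_+) - 1 \bigr)$ into $c(G_+) - c(G)$, the summed upper bound $\sum_i \bigl( n_i - c((G^{(i)})_-) \bigr)$ into $n - c(G_-)$, and analogously for the other two lines, which is exactly \eqref{eq:estimates}.

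The step I expect to require the most care is the additivity of the component counts, and in particular the bookkeeping of the isolated vertices of $G_\pm$: a vertex incident to no positive edge forms its own component of $G_+$, and one must check that these singletons are distributed among the $(G^{(i)})_+$ consistently with the per-component counts, so that the sums collapse as claimed (the argument is in fact insensitive to whether or not such singletons are counted, as long as the same convention is used in the connected case). Beyond this, I would verify that \cite[Theorem~2.10]{Bronski} is literally the $c(G)=1$ instance of the present statement, after which the reduction is purely formal; the optimality asserted in the introduction then follows from optimality in the connected case, realised componentwise.
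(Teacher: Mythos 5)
Your proposal is correct and follows essentially the same route as the paper's own proof: permute the vertices so that $L$ becomes block diagonal over the connected components, apply the connected-case bounds of Bronski--DeVille to each block, and sum, using additivity of the inertia and of the component counts $c(G_\pm)$. Your extra care about isolated vertices in $G_\pm$ and the verification that component counts add up is a point the paper leaves implicit, but it is the same argument.
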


\begin{proof}
If $G$ has $k$ connected components, for $i = 1, \ldots k$, let $G_i$ denote the connected components with $n_i$ vertices, so $ n_1 + \ldots +n_k=n$. Reorder the rows and columns of $L$ if necessary to assume that  $L$ is a block diagonal matrix $$L= diag(L_1, \ldots, L_k).$$ 
By the estimates in \cite{Bronski}, for each $i=1, \ldots,k$ we have $c(\CG_{i+}) -1 \, \,  \leq n_+(L_i) \leq \, \, n_i - c(G_{i-})$, so
\begin{eqnarray} 
c(G_{1+})+ \ldots +c(G_{k+}) -k \, \,  &\leq& n_+(L_1)+ \ldots + n_+(L_k) \leq \\ \nonumber &\leq& \, \, n_1+ \ldots +n_k - (c(G_{1-}) + \ldots + c(G_{k-})), \nonumber
\end{eqnarray}
so the first estimate follows. The other inequalities follow similarly. 
\end{proof}
\noindent If the graph $G$ is connected, then $c(G) = 1$ and (\ref{eq:estimates}) are the bounds given in \cite{Bronski}. \\

We notice that this result involves only topological information about the graph, namely the connectivity of the graph and the sign information on the edge weights. In particular,  it follows that, for a graph with $n$ vertices, the difference between the upper and lower bounds in (\ref{eq:estimates}) is an integer that can vary between 0 and $n-1$.

\subsection{Laplacian mappings} \label{subsec:Laplacian maps}

In this subsection we use the idea of inserting weights on an unweighted graph $G$ 
to adapt the algebraic graph theory for weighted graphs to the associated Laplacian network graph $\CG$. Let us explain: for a given unweighted graph $G$, we  associate an admissible coupled system (\ref{eq:ODE}). For our case of interest, the Jacobian  $Jf(x)$ of the governing vector field is, at any point $x$, a Laplacian matrix. The idea is to look at it as a weighted Laplacian of $G$, considering $G$ as a weighted graph. Under this approach, we use Theorem~\ref{Bronskiteo}. \\

We start with three definitions:

\begin{defin} \label{def:Laplacian matrix}
A matrix  $L =(l_{ij})$ of order $n$ is a Laplacian matrix if it is symmetric with $l_{ii}=-\sum_{j \neq i}l_{ij}$, for $i = 1, \ldots, n$.  
\end{defin}

Our interest here is to deduce an algebraic expression of a map 
$f : P \to P$ on a real vector space $P$,  so for simplicity we assume $P = {\mathbb R}^n$. 

\begin{defin} \label{def:Laplacian map}
A map $f: {\mathbb R^n} \to {\mathbb R}^n$ of class $C^1$ is a Laplacian map if its Jacobian $Jf(x)$ at any $x \in \Rr^n$ is a linear map whose matrix is a Laplacian matrix.
\end{defin}
 \noindent The set of Laplacian maps shall be denoted by $LS(\Rr^n)$. \\

It follows from the two definitions above that being bidirected is a necessary assumption in this context. We have:
 
\begin{defin} \label{def:Laplacian network}
For a bidirected graph, an associated network $\CG$ is a Laplacian network if the admissible map is a Laplacian map.
\end{defin}

For the main result, we need:

\begin{lema} \label{lem:zero sum}
If $h: {\Rr}^n \to {\Rr}$ is such that
\begin{eqnarray} \label{eq1}
\sum _{i =1}^n \frac{\partial h}{\partial x_i}(x) =0.
\end{eqnarray}
then  
\begin{equation} \label{eq:component}
h(x_1, \ldots, x_n)=\alpha(x_1-x_n, \ldots, x_{n-1}-x_n),
\end{equation}
for some $\alpha : \Rr^{n-1} \to \Rr$.
\end{lema}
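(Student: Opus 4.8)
The plan is to read the hypothesis geometrically. The sum $\sum_{i=1}^n \partial h/\partial x_i(x)$ is exactly the directional derivative of $h$ along the diagonal vector $(1,\ldots,1)$, so condition \eqref{eq1} asserts that $h$ is constant along every line parallel to $(1,\ldots,1)$. Since the quantities $x_1-x_n,\ldots,x_{n-1}-x_n$ are invariant under translation in that direction and together parametrize the quotient of $\Rr^n$ by it, one expects $h$ to factor through them. To turn this into a computation I would introduce the linear change of variables $\Phi\colon\Rr^n\to\Rr^n$ given by $y_i=x_i-x_n$ for $i=1,\ldots,n-1$ and $y_n=x_n$. This map is invertible, with inverse $x_i=y_i+y_n$ for $i<n$ and $x_n=y_n$, so $\tilde h:=h\circ\Phi^{-1}$ is again of class $C^1$.

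The key step is then a chain-rule computation of the partial derivative of $\tilde h$ in its last slot. Because $\partial x_i/\partial y_n=1$ for every index $i$ (including $i=n$), one obtains $\partial\tilde h/\partial y_n=\sum_{i=1}^n \partial h/\partial x_i$, which vanishes identically by \eqref{eq1}. Hence $\tilde h$ has zero partial derivative with respect to $y_n$ throughout $\Rr^n$.

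It remains to pass from the vanishing of this partial derivative to genuine independence of $y_n$. Fixing $(y_1,\ldots,y_{n-1})$, the single-variable map $t\mapsto\tilde h(y_1,\ldots,y_{n-1},t)$ has zero derivative on all of $\Rr$, hence is constant by the mean value theorem; setting $\alpha(y_1,\ldots,y_{n-1}):=\tilde h(y_1,\ldots,y_{n-1},0)$ and unwinding $\Phi$ yields $h(x)=\alpha(x_1-x_n,\ldots,x_{n-1}-x_n)$, as claimed. The one point that should not be taken for granted is precisely this last implication: it relies on the fibers of $\Phi$ in the $y_n$-direction being connected, which holds because the domain is all of $\Rr^n$; on a non-convex or disconnected domain the conclusion could fail, so the global nature of the hypothesis is what makes the argument work.
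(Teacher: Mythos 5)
Your proof is correct and follows essentially the same route as the paper: the identical change of variables $t_i=x_i-x_n$, $t_n=x_n$, the chain-rule computation showing the last partial derivative vanishes, and the conclusion that the transformed function is independent of its last argument. The extra care you take at the end (mean value theorem along lines, connectedness of the fibers) is a valid and slightly more explicit justification of the step the paper leaves implicit.
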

\begin{proof}
Consider the change of coordinates $t_i = x_i - x_n,$ $i = 1 \ldots, n-1$, $t_n = x_n$.
We then have
\[ h(x_1, \ldots, x_n) = h(t_1+t_n, \ldots, t_{n-1} + t_n, t_n) =  \tilde{\alpha}(t_1, \ldots, t_n).  \]
But
\[ \frac{\partial \tilde{\alpha}}{\partial t_n}=\sum_i \frac{\partial h}{\partial x_i}=0.    \]
Hence,
$\tilde{\alpha}(t_1, \ldots, t_n) = \alpha(t_1, \ldots, t_{n-1}),$ 
and the result follows.
\end{proof}

We now present the characterization of the general form of a Laplacian mapping:

\begin{teo} \label{them:general form}
$ f=(f_1, \ldots, f_n) \in LS(\Rr^n)$ if, and only if,    
\begin{eqnarray}
f_i(x_1, \ldots, x_n)&=&\dfrac{\partial g}{\partial t_i}(x_1-x_n, \ldots, x_{n-1}-x_n), \quad i=1, \ldots, n-1, \nonumber \\
f_n&=&-f_1-\ldots -f_{n-1} + k. \nonumber 
\end{eqnarray} 
for some  $g  \in C^2(\Rr^{n-1})$ and some constant $k \in \Rr$.
\end{teo}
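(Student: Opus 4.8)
The plan is to prove both implications by translating the Laplacian condition on $Jf(x)$ into a system of partial differential equations and then integrating them using Lemma~\ref{lem:zero sum}. First I would unpack what it means for $Jf(x)$ to be a Laplacian matrix at every $x$. By Definition~\ref{def:Laplacian matrix}, the matrix $\bigl(\partial f_i/\partial x_j\bigr)$ must be symmetric and have each row summing to zero. Symmetry gives the integrability conditions
\begin{equation} \label{eq:symmetry}
\frac{\partial f_i}{\partial x_j} = \frac{\partial f_j}{\partial x_i}, \quad \text{for all } i,j,
\end{equation}
and the zero-row-sum condition gives
\begin{equation} \label{eq:rowsum}
\sum_{j=1}^n \frac{\partial f_i}{\partial x_j}(x) = 0, \quad \text{for all } i.
\end{equation}

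For the forward direction, condition \eqref{eq:rowsum} applied to each component $f_i$ is exactly the hypothesis of Lemma~\ref{lem:zero sum}, so each $f_i$ factors as $f_i(x) = \alpha_i(x_1 - x_n, \ldots, x_{n-1} - x_n)$ for some $\alpha_i \colon \Rr^{n-1} \to \Rr$. The key step is then to use the symmetry condition \eqref{eq:symmetry} to recognize the first $n-1$ functions $\alpha_1, \ldots, \alpha_{n-1}$ as the gradient of a single potential. Writing $t_i = x_i - x_n$ and computing $\partial f_i/\partial x_j$ in the new coordinates, the symmetry relations among the indices $1, \ldots, n-1$ become $\partial \alpha_i/\partial t_j = \partial \alpha_j/\partial t_i$, which is precisely the condition that the vector field $(\alpha_1, \ldots, \alpha_{n-1})$ on $\Rr^{n-1}$ is closed; since $\Rr^{n-1}$ is simply connected, it is the gradient of some $g \in C^2(\Rr^{n-1})$, giving $f_i = \partial g/\partial t_i$ for $i = 1, \ldots, n-1$. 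For the last component, the zero-row-sum condition on a Laplacian matrix forces the columns to sum to zero as well (by symmetry), so $\sum_{i=1}^n f_i$ has zero gradient and is therefore a constant $k$, yielding $f_n = -f_1 - \cdots - f_{n-1} + k$.

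For the converse, I would simply start from the stated form and verify directly that $Jf(x)$ is Laplacian. For $i, j \in \{1, \ldots, n-1\}$ the entry $\partial f_i/\partial x_j = \partial^2 g/\partial t_i \partial t_j$ is symmetric by equality of mixed partials, and the remaining symmetry relations involving the $n$-th row and column follow from differentiating $f_n = -\sum_{i<n} f_i + k$ together with the chain-rule relation $\partial/\partial x_n = -\sum_{i<n} \partial/\partial t_i$ acting on each $f_i$. The zero-row-sum condition is then a routine check using the same chain-rule identity. The main obstacle, and the genuinely substantive step, is the passage through symmetry to obtain the scalar potential $g$: one must carefully compute the Jacobian entries in the shifted coordinates $t_i = x_i - x_n$, confirm that the closedness condition $\partial \alpha_i/\partial t_j = \partial \alpha_j/\partial t_i$ really is equivalent to the off-diagonal symmetry of $Jf$, and invoke the Poincaré lemma on $\Rr^{n-1}$; the handling of the $n$-th coordinate, where the shift makes the bookkeeping slightly delicate, is where errors are most likely to creep in.
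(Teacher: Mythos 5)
Your proposal is correct and takes essentially the same route as the paper's proof: apply Lemma~\ref{lem:zero sum} to each component, translate symmetry of $Jf$ in the coordinates $t_i = x_i - x_n$ into the closedness relations $\partial \alpha_i/\partial t_j = \partial \alpha_j/\partial t_i$, and integrate to obtain the potential $g$. The only (cosmetic) differences are that the paper produces $g$ by an explicit iterated-integral formula rather than citing the Poincar\'e lemma, and it pins down $f_n$ through the symmetry relations involving the $n$-th row combined with the closedness relations, whereas you use the equivalent observation that a symmetric zero-row-sum Jacobian forces $\sum_i f_i$ to be constant.
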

\begin{proof}
From Lemma~\ref{lem:zero sum}, we have
\begin{eqnarray}
f_i(x_1, \ldots, x_n)&=&\alpha_i(x_1-x_n, \ldots, x_{n-1}-x_n), \quad i=1, \ldots, n. \nonumber 
\end{eqnarray} 
The Jacobian $Jf(x)$ is symmetric, so
\begin{eqnarray}
\frac{\partial \alpha_i}{\partial t_j}=\frac{\partial \alpha_j}{\partial t_i},  \quad i,j=1, \ldots, n-1, i \neq j, \label{eq6} \\ 
\frac{\partial \alpha_n}{\partial t_i} = - \frac{\partial \alpha_i}{\partial t_1}- \ldots - \frac{\partial \alpha_i}{\partial t_{n-1}}  \quad i=1, \ldots, n-1,  \label{eq7}
\end{eqnarray}  
which imply that
\begin{eqnarray} \label{eq10}
\frac{\partial \alpha_n}{\partial t_i} = - \frac{\partial \alpha_1}{\partial t_i}- \ldots - \frac{\partial \alpha_{n-1}}{\partial t_{i}}  = \frac{\partial }{\partial t_i}(-\alpha_1- \ldots - \alpha_{n-1}), 
\quad i=1, \ldots, n-1.
\end{eqnarray}
Hence, $\alpha_n = - \sum_{i=1}^{n-1} \alpha_i + k$, for some constant $k.$ 

Finally, given any function  $g \colon \Rr^{n-1} \rightarrow \Rr$ of class $C^2$, we can take $\alpha_i=\dfrac{\partial g}{\partial t_i}$, $i=1, \ldots, n-1$. Conversely, any map $\alpha=(\alpha_1, \ldots, \alpha_{n-1})$ of class $C^1$  satisfying  \eqref{eq6} is a gradient mapping. In fact, just set 
$$g(t_1, \ldots,t_{n-1})=\sum_{i=1}^{n-1}\int_0^{t_i}\alpha_i(0, \ldots,0,s,t_{i+1}, \ldots, t_{n-1})ds.$$ 
\end{proof}
The example below is a simple illustration of Theorem~\ref{them:general form}.
\begin{expl}
	For $n=3$, consider $g(y,z)=\dfrac{y^2z^2}{2}$. The following are the coordinate  functions of a Laplacian mapping $f \colon \Rr^3 \rightarrow \Rr^3$:
	\begin{eqnarray}
	f_1(x_1,x_2,x_3)&=&(x_1-x_3)(x_2-x_3)^2 \nonumber \\
	f_2(x_1,x_2,x_3)&=&(x_1-x_3)^2(x_2-x_3) \nonumber \\
	f_3(x_1,x_2,x_3)&=&-(x_1-x_3)(x_2-x_3)^2-(x_1-x_3)^2(x_2-x_3). \nonumber 
	\end{eqnarray}
\end{expl}

The next two corollaries follow straightforwardly:

\begin{coro} 
The following linear isomorphism  holds:
$$LS(\Rr^n) \ \simeq (C^2(\Rr^{n-1})/\Rr) \oplus \Rr.$$ 
\end{coro}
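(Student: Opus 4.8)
The plan is to exhibit an explicit linear isomorphism and verify it is bijective, with Theorem~\ref{them:general form} doing essentially all the work. I would define the map
$$\Phi \colon \bigl(C^2(\Rr^{n-1})/\Rr\bigr) \oplus \Rr \longrightarrow LS(\Rr^n)$$
by sending a pair $([g], k)$ to the map $f = (f_1, \ldots, f_n)$ with $f_i(x) = \frac{\partial g}{\partial t_i}(x_1 - x_n, \ldots, x_{n-1} - x_n)$ for $i = 1, \ldots, n-1$ and $f_n = -f_1 - \cdots - f_{n-1} + k$. By Theorem~\ref{them:general form} the image indeed lies in $LS(\Rr^n)$, so $\Phi$ is well defined as a map into the target.

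Next I would check that $\Phi$ is well defined on the quotient and linear. If $g' = g + c$ for a constant $c \in \Rr$, then $\partial g'/\partial t_i = \partial g/\partial t_i$ for every $i$, so the first $n-1$ components of $f$ are unchanged and hence so is $f_n$; thus $\Phi$ depends only on the class $[g]$. Linearity in both $[g]$ and $k$ is immediate from the linearity of partial differentiation.

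Surjectivity of $\Phi$ is precisely the forward direction of Theorem~\ref{them:general form}: every $f \in LS(\Rr^n)$ is of the displayed form for some $g \in C^2(\Rr^{n-1})$ and some $k \in \Rr$, so $f = \Phi([g], k)$. For injectivity I would suppose $\Phi([g], k) = 0$. Then $\partial g/\partial t_i = 0$ for $i = 1, \ldots, n-1$, and since $\Rr^{n-1}$ is connected this forces $g$ to be constant, i.e.\ $[g] = 0$ in $C^2(\Rr^{n-1})/\Rr$; moreover $f_n = 0$ together with $f_1 = \cdots = f_{n-1} = 0$ gives $k = 0$. Hence the kernel is trivial.

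The only genuinely substantive point, and the reason the quotient by $\Rr$ appears at all, is the non-uniqueness of the potential $g$: two potentials that differ by an additive constant produce the same gradient and therefore the same Laplacian map. Recognizing that this constant is the exact indeterminacy --- no more, no less --- is what pins down the factor $C^2(\Rr^{n-1})/\Rr$, while the separate summand $\Rr$ records the free constant $k$ in the last component. Everything else is a routine verification.
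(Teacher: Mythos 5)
Your proof is correct and takes essentially the same route as the paper, which simply asserts that the corollary ``follows straightforwardly'' from Theorem~\ref{them:general form}: you have filled in the routine verification (well-definedness on the quotient, linearity, surjectivity from the forward implication, injectivity from triviality of the kernel) that the paper leaves implicit. The only step worth stating explicitly, which you use silently in the injectivity argument, is that $x \mapsto (x_1-x_n,\ldots,x_{n-1}-x_n)$ maps $\Rr^n$ onto all of $\Rr^{n-1}$, so vanishing of the components $f_i$ does force $\partial g/\partial t_i \equiv 0$ everywhere.
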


\begin{coro} \label{corosumzero} 
If $f=(f_1, \ldots, f_n) \in LS(\Rr^n)$, then $\sum_{i=1}^n f_i$ is constant. 
\end{coro}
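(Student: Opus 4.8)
The plan is to derive Corollary~\ref{corosumzero} directly from the characterization in Theorem~\ref{them:general form}. The final statement claims that for any $f = (f_1, \ldots, f_n) \in LS(\Rr^n)$, the sum $\sum_{i=1}^n f_i$ is a constant function on $\Rr^n$. Since the theorem already gives the explicit form of every Laplacian map, the cleanest route is to simply add up the components and watch the telescoping cancellation occur.

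First I would invoke Theorem~\ref{them:general form} to write $f_i = \dfrac{\partial g}{\partial t_i}(x_1 - x_n, \ldots, x_{n-1} - x_n)$ for $i = 1, \ldots, n-1$, together with $f_n = -f_1 - \cdots - f_{n-1} + k$ for some constant $k \in \Rr$. Then the sum is
\begin{equation} \nonumber
\sum_{i=1}^n f_i = \left( \sum_{i=1}^{n-1} f_i \right) + f_n = \left( \sum_{i=1}^{n-1} f_i \right) + \left( - \sum_{i=1}^{n-1} f_i + k \right) = k.
\end{equation}
Thus the sum collapses to the constant $k$, and the result follows immediately. This is the heart of the argument; the structure of $f_n$ as the negative sum of the other components plus a constant is precisely what forces the total to be constant.

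Alternatively, if one prefers a proof that does not lean on the full strength of the characterization theorem, I would argue directly from Definition~\ref{def:Laplacian map}. Observe that $\sum_{i=1}^n f_i$ is a scalar function whose gradient has $j$-th component $\sum_{i=1}^n \dfrac{\partial f_i}{\partial x_j}$, which is exactly the sum of the entries in the $j$-th column of the Jacobian $Jf(x)$. Because $Jf(x)$ is a Laplacian matrix, it is symmetric with each row (hence each column) summing to zero, so every partial derivative of $\sum_{i=1}^n f_i$ vanishes identically. A connected domain argument (here $\Rr^n$ is convex, hence connected) then yields that $\sum_{i=1}^n f_i$ is constant.

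I do not anticipate a genuine obstacle, since the statement is an elementary consequence of results already in hand. The only point requiring mild care is to make explicit which fact about Laplacian matrices is being used: in the first approach it is the displayed form of $f_n$, while in the second it is the zero-column-sum property, which follows from symmetry together with the zero-row-sum condition $l_{ii} = -\sum_{j \neq i} l_{ij}$ of Definition~\ref{def:Laplacian matrix}. Either way the conclusion is immediate, so I would present the first, shorter derivation as the primary proof.
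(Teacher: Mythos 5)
Your primary argument is exactly the paper's: the corollary is stated there as following straightforwardly from Theorem~\ref{them:general form}, namely by summing the components so that $f_n = -f_1 - \cdots - f_{n-1} + k$ forces $\sum_{i=1}^n f_i = k$. Your alternative second argument (zero column sums of the Jacobian imply a vanishing gradient of $\sum_i f_i$) is also correct, but the proof you chose to present as primary coincides with the paper's.
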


We end with the characterization of the Laplacian maps, which is also direct from Theorem~\ref{them:general form}:

\begin{teo} \label{corograd}
$f=(f_1, \ldots, f_n) \in LS(\Rr^n)$ if, and only if,  $f$ is a gradient map $f= \nabla \bar{g}$, where $\bar{g}(x_1, \ldots, x_n) =g(x_1-x_n, \ldots, x_{n-1}-x_n) + kx_n$. 	
\end{teo}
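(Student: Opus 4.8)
The plan is to leverage Theorem~\ref{them:general form}, which already gives the explicit coordinate form of any $f \in LS(\Rr^n)$, and simply recognize that those coordinates are exactly the partial derivatives of the claimed potential $\bar{g}$. So the strategy is to compute $\nabla \bar{g}$ directly and match it against the formulas from Theorem~\ref{them:general form}. First I would establish the forward direction: assume $f \in LS(\Rr^n)$, so by Theorem~\ref{them:general form} there is some $g \in C^2(\Rr^{n-1})$ and a constant $k$ with $f_i = \frac{\partial g}{\partial t_i}(x_1 - x_n, \ldots, x_{n-1} - x_n)$ for $i = 1, \ldots, n-1$, and $f_n = -f_1 - \cdots - f_{n-1} + k$. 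Define $\bar{g}(x_1, \ldots, x_n) = g(x_1 - x_n, \ldots, x_{n-1} - x_n) + k x_n$ and verify that $\nabla \bar{g} = f$.

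The computation splits into two cases. For $i = 1, \ldots, n-1$, the chain rule gives $\frac{\partial \bar{g}}{\partial x_i} = \frac{\partial g}{\partial t_i}(x_1 - x_n, \ldots, x_{n-1} - x_n)$, since $t_j = x_j - x_n$ depends on $x_i$ only through the single term $t_i$ (with derivative $1$), and the $k x_n$ term contributes nothing. This matches $f_i$ exactly. The interesting coordinate is $x_n$: each $t_j = x_j - x_n$ contributes $\frac{\partial t_j}{\partial x_n} = -1$, so
\[
\frac{\partial \bar{g}}{\partial x_n} = \sum_{j=1}^{n-1} \frac{\partial g}{\partial t_j}(x_1 - x_n, \ldots, x_{n-1} - x_n)\cdot(-1) + k = -\sum_{j=1}^{n-1} f_j + k = f_n,
\]
using the formula for $f_n$ from Theorem~\ref{them:general form}. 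Thus $\nabla \bar{g} = f$, establishing that $f$ is a gradient map of the stated form.

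For the converse, I would run the same chain-rule computation in reverse: starting from $\bar{g}(x) = g(x_1 - x_n, \ldots, x_{n-1} - x_n) + k x_n$ with $g \in C^2$, the identical calculation shows that $\nabla \bar{g}$ has components of precisely the form characterized in Theorem~\ref{them:general form}, hence $f = \nabla \bar{g} \in LS(\Rr^n)$. (Since $g \in C^2$, the Jacobian $J f = \mathrm{Hess}\,\bar{g}$ is symmetric, and one checks its rows sum to zero, so it is genuinely Laplacian; but all of this is already packaged in Theorem~\ref{them:general form}, so invoking that theorem suffices.)

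I do not expect any serious obstacle here, since the entire content is a bookkeeping exercise in the chain rule once Theorem~\ref{them:general form} is in hand — this is why the paper flags the result as following ``straightforwardly.'' The only point requiring a moment of care is the $x_n$-derivative, where the $-1$ factors from each $t_j$ and the additive $k x_n$ term must be tracked correctly so that $-\sum_{j} f_j + k$ emerges; getting the sign of the $k$ contribution right (it comes from $\frac{\partial}{\partial x_n}(k x_n) = k$, with a $+$ sign) is the one place an error could creep in. Everything else is automatic from the explicit formulas.
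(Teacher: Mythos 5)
Your proof is correct and matches the paper's approach: the paper states Theorem~\ref{corograd} as an immediate consequence of Theorem~\ref{them:general form}, and your chain-rule computation of $\nabla \bar{g}$ (including the careful treatment of the $x_n$-derivative and the $+k$ term) is exactly the verification that remark leaves implicit. Nothing is missing.
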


\subsection{Laplacian networks with additive structure} \label{subseq:additive structure}

Here we consider Laplacian admissible maps $f \in LS(\Rr^n)$ with the extra condition that each component $f_c$ of cell $c$ has an additive input structure, namely 
\begin{equation} \label{eq:additive}
f_c(x)=\sum_{d\neq c} \phi_{cd}(x_c,x_d).
\end{equation}
It is worth mentioning that this structure provides the simplest way to associate an admissible graph following the formalism in \cite{AM}:  for such a map,  we consider the following coupling rule:  
\begin{eqnarray} \label{eq:optimized}
(a,b) \sim_{\CE} (c,d) \ \Leftrightarrow \  \phi_{ba}=\phi_{dc}. 
\end{eqnarray}
The question of realization of admissible graphs for a given map has been answered in \cite{AM}, and the notion of the optimized admissible graph for this map is given. In this setting, we point out that, in the present context, the condition (\ref{eq:optimized}) realizes a unique optimized admissible  graph  for the map as in (\ref{eq:additive});  see \cite[Remark 3.1 b]{AM}. 

We notice that, by Lemma~\ref{lem:zero sum}, 
\begin{eqnarray} f_n(x_1, \ldots, x_n)=\alpha_n(x_1-x_n, \ldots, x_{n-1}-x_n)
\end{eqnarray}
and from (\ref{eq:additive}),   
\begin{eqnarray} \frac{\partial^2 \alpha_n}{\partial t_i \partial t_j}=\frac{\partial^2 f_n}{\partial x_i \partial x_j}=0, \quad i \neq j <n. \label{eq11}
\end{eqnarray} 
Hence, $\alpha_n$ `splits the variables' $x_i$ and $x_j$, $i \neq j < n$; more precisely, 
$$f_n(x_1, \ldots, x_n)=\sum_{d < n} \phi_{nd}(x_n -x_d).$$
By the same reasoning, the same condition holds for other components, and so 
$$f_c(x)=\sum_{d \neq c} \phi_{cd}(x_c-x_d), \quad \forall c=1, \ldots, n. $$

Since the graph is bidirected, we have  $(a,b) \sim_\CE (b,a)$, and so  $\phi_{ba}=\phi_{ab}$. Furthermore, by the Laplacian condition,  $$\phi_{ab}'(x_a-x_b) = - \dfrac{\partial f_a}{\partial x_b}(x) =  - \dfrac{\partial f_b}{\partial x_a}(x) = \phi_{ba}'(x_b-x_a).$$
Therefore, 
$$\phi_{ab}'(x_a-x_b) =  \phi_{ab}'(-(x_a-x_b)), $$
that is, all coupling functions must be odd.  We have just proved:

\begin{teo}\label{teo2} \label{them:additive component}
Let $\CG$ be a  bidirected graph network. If $f$ is a $\CG$-admissible map, then $f \in LS(\Rr^n)$ has an additive structure if, and only if, each component of $f$ is of the form 
\begin{eqnarray} f_c(x)=k_{[c]}+ \sum_{d \in  I(c)} \phi_{[(c,d)]}(x_d-x_c), \label{eq3}
\end{eqnarray}
where $\phi_{[(c,d)]}$ is an odd function that depends only on the $[(c,d)]$-class of the edge $(c,d)$ and $k_{[c]}$ is a constant that depends only on the $I$-class $[c]$ of $c$. 
\end{teo}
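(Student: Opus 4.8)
The plan is to prove the equivalence by treating the two implications separately, the forward one carrying the substance. Throughout, recall that $f \in LS(\Rr^n)$ means $Jf(x)$ is symmetric with zero row sums (Definition~\ref{def:Laplacian matrix}), the additive structure is the hypothesis \eqref{eq:additive}, and bidirectedness guarantees $(c,d)\sim_\CE(d,c)$, so a coupling and its reverse lie in the same $\sim_\CE$-class.

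For the forward implication I would first extract the dependence on differences. The zero-row-sum condition says precisely that $\sum_{j}\partial f_c/\partial x_j \equiv 0$ for each $c$, so Lemma~\ref{lem:zero sum} applies with $h=f_c$ and gives $f_c(x)=\alpha_c(x_1-x_n,\ldots,x_{n-1}-x_n)$; equivalently each $f_c$ depends only on the pairwise differences, so I may re-center these coordinates at any vertex. Fixing $c$ and setting $s_d=x_d-x_c$ for $d\neq c$, the additive hypothesis \eqref{eq:additive} forces $\partial^2 f_c/\partial x_d\partial x_{d'}=0$ whenever $d\neq d'$ are both distinct from $c$, that is $\partial^2\alpha_c/\partial s_d\partial s_{d'}=0$; since a function on $\Rr^{n-1}$ all of whose mixed second partials vanish splits as a sum of one-variable functions, this yields $f_c(x)=\sum_{d\neq c}\phi_{cd}(x_d-x_c)$.

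It then remains to extract oddness and the class structure. Differentiating gives $\partial f_c/\partial x_d=\phi_{cd}'(x_d-x_c)$, so symmetry of $Jf$ combined with $\phi_{cd}=\phi_{dc}$ (from bidirectedness) yields $\phi_{cd}'(x_d-x_c)=\phi_{cd}'(x_c-x_d)$, whence $\phi_{cd}'$ is even and $\phi_{cd}$ is odd up to an additive constant. Absorbing the constants $\phi_{cd}(0)$ into a single cell-dependent constant produces the term $k_{[c]}$ of \eqref{eq3}; finally, $\CG$-admissibility (the pullback condition, Definition~\ref{def:adm map}(b)) forces each coupling to depend only on the $\sim_\CE$-class of its edge and each constant only on the input class, giving the labels $\phi_{[(c,d)]}$ and $k_{[c]}$.

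The reverse implication is a direct verification: a map of the form \eqref{eq3} is manifestly additive, its Jacobian has off-diagonal entries $\phi_{[(c,d)]}'(x_d-x_c)$, the row sums vanish because each coupling contributes $+\phi'$ off the diagonal and $-\phi'$ on it, and symmetry follows from oddness of $\phi$ (so $\phi'$ is even) together with bidirectedness. I expect the only genuine obstacle to be keeping the variable-splitting step uniform across all components $c$: Lemma~\ref{lem:zero sum} singles out the coordinate $x_n$, so to run the argument for every $c$ without index bookkeeping errors I would, as above, re-express the dependence-on-differences in coordinates centered at $c$ before invoking the vanishing of the mixed partials.
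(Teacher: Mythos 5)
Your proposal is correct and follows essentially the same route as the paper's own argument: Lemma~\ref{lem:zero sum} to reduce to difference variables, vanishing mixed second partials from the additive hypothesis to split each component into one-variable couplings, and then bidirectedness plus symmetry of the Jacobian to force $\phi'_{cd}$ even, hence $\phi_{cd}$ odd. If anything, you are slightly more careful than the paper in absorbing the additive constants into $k_{[c]}$ and in invoking the pullback condition to justify the class labels, but these are refinements of the same proof, not a different one.
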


The two theorems  above imply the following: 

\begin{coro} \label{coroadd}
Let $\CG$ be a bidirected graph network. Let $f$ be a $\CG$-admissible map. Then $f \in LS(\Rr^n)$ has an additive structure if, and only if, $f$ is a gradient mapping $f= - \nabla {g}$, where 
\[ {g}(x)=\sum_{  (c,d) \in \CE} \psi_{[(c,d)]}(x_d-x_c) + \sum_{c \in \CC}k_{[c]}x_c, \] 
$\psi_{[(c,d)]}$ is an even function that depends only on the  $[(c,d)]$-class of the edge $(c,d)$ and $k_{[c]}$ is a constant determined by the $I$-class of $c$.

\end{coro}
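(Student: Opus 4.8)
The plan is to read off both implications from the characterizations already established, treating the forward direction (additive Laplacian $\Rightarrow$ gradient of the stated form) as the substantive one and the converse as a direct differentiation. For the converse, I would start from a potential $g$ of the prescribed shape, apply the chain rule to compute $-\partial g/\partial x_c$, and observe that each summand $\psi_{[(c,d)]}(x_d-x_c)$ contributes its derivative $\psi'_{[(c,d)]}=\phi_{[(c,d)]}$, which is odd precisely because $\psi_{[(c,d)]}$ is even. This recovers exactly the component form of Theorem~\ref{them:additive component}, so $f=-\nabla g$ is admissible with additive structure; moreover, writing the linear term $\sum_{c} k_{[c]}x_c$ as $\sum_{c<n}k_{[c]}(x_c-x_n)+\big(\sum_c k_{[c]}\big)x_n$ shows that $-g$ is a function of the differences $x_i-x_n$ plus a multiple of $x_n$, so Theorem~\ref{corograd} yields $f\in LS(\Rr^n)$ at once.

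For the forward direction, I would invoke Theorem~\ref{them:additive component} to write each component as $f_c(x)=k_{[c]}+\sum_{d\in I(c)}\phi_{[(c,d)]}(x_d-x_c)$ with $\phi_{[(c,d)]}$ odd and depending only on the edge class. For every edge class I would then set $\psi_{[(c,d)]}(u)=\int_0^u \phi_{[(c,d)]}(s)\,ds$, which is even exactly because $\phi_{[(c,d)]}$ is odd, and which inherits its class-dependence from $\phi_{[(c,d)]}$. Assembling $g(x)=\tfrac12\sum_{(c,d)\in\CE}\psi_{[(c,d)]}(x_d-x_c)-\sum_{c}k_{[c]}x_c$, the remaining task is to verify $f=-\nabla g$ by differentiating term by term.

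The main obstacle --- really the only delicate point --- is the bookkeeping in this last gradient computation on a bidirected graph. Differentiating the edge sum with respect to $x_c$ produces one contribution from the arrows entering $c$ and one from the arrows leaving $c$; using bidirectedness ($[(a,c)]=[(c,a)]$, hence $\phi_{[(a,c)]}=\phi_{[(c,a)]}$) together with the oddness of $\phi_{[(c,d)]}$ makes these two contributions coincide, which is precisely what accounts for the factor $\tfrac12$ (equivalently, one may sum over unordered edges, the two orientations contributing equal terms since $\psi$ is even). Tracking the sign of the linear term then matches the constant $k_{[c]}$, and I would finally reconcile the resulting $g$ with the statement by absorbing the $\tfrac12$ and the sign into the even functions $\psi_{[(c,d)]}$ and the class constants $k_{[c]}$, which the corollary only requires to exist.
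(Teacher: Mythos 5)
Your proposal is correct and follows essentially the route the paper intends: the corollary is stated as a direct consequence of Theorems~\ref{them:general form} and \ref{them:additive component}, namely integrating the odd coupling functions of \eqref{eq3} to even potentials $\psi_{[(c,d)]}$ and differentiating back for the converse. Your careful handling of the double counting over the two orientations of each bidirected edge (the factor $\tfrac12$, absorbed into the even functions) and of the signs is exactly the bookkeeping the paper leaves implicit.
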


The example  below illustrates the theorem above.  This is a nonhomogeneous network graph with six cells, with two $I$-classes $\{1, 2, 4, 5\}$ and $\{3, 6\}$, given in Fig.~\ref{fig:example}. 

\begin{figure}[ht] 
	\centering 
	\includegraphics[width=4.2cm]{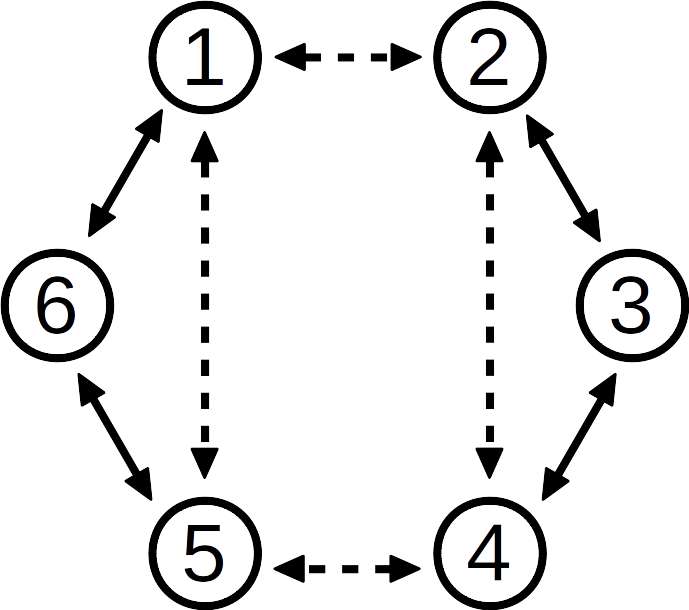} 
	\caption{ A nonhomogeneous bidirected graph with six identical cells and two types of edges.} \label{fig:example}
\end{figure}

\begin{expl}
For a bidirected graph with six identical cells and two types of edges as given in Fig.~\ref{fig:example}, a Laplacian network $\CG$ on ${\Rr}^6$ with additive structure is given by

\[ \dot{x} = f(x), \]
where $f = (f_1, \ldots, f_6) \colon \Rr^6 \rightarrow \Rr^6,$ 
\begin{eqnarray}
f_1(x)&=& \kappa+\theta(x_1-x_2)+\theta(x_1-x_5)+\phi(x_1-x_6) \nonumber \\
f_2(x)&=& \kappa+\theta(x_2-x_1)+\theta(x_2-x_4)+\phi(x_2-x_3) \nonumber \\
f_3(x)&=& \ell+\phi(x_3-x_2)+\phi(x_3-x_4) \nonumber \\
f_4(x)&=& \kappa+\theta(x_4-x_2)+\theta(x_4-x_3)+\phi(x_4-x_5) \nonumber \\
f_5(x)&=& \kappa+\theta(x_5-x_1)+\theta(x_5-x_4)+\phi(x_5-x_6) \nonumber \\
f_6(x)&=& \ell +\phi(x_6-x_1)+\phi(x_6-x_5), \nonumber
\end{eqnarray}
for $\theta, \phi$  any odd functions of class $C^1$ and $\kappa, \ell$ constant.
\end{expl}

\section{Critical points on synchrony subspaces of additive Laplacian networks} \label{sec:examples}

The starting point in the study of a dynamical behaviour of a system, or bifurcations with variations of possible external parameters,  is the analysis of existence and stability of equilibrium points. Here we proceed in this direction. \\

In Subsection~\ref{subsec:total synchronies} we prove that, for any homogeneous additive Laplacian network, Lyapunov stability holds generically for totally synchronyous critical points. In the last two subsections we choose two examples to search for critical points with the remaining possible synchrony.  

The following result ensures that the investigation of trajectory stability in Laplacian networks is based on the eigenvalues of the Jacobian evaluated at equilibrium points.

\begin{lema} 
Let $f \in LS(\Rr^n)$.	 The $\alpha$-limit and $\omega$-limit of any trajectory are equilibrium points. 
\end{lema}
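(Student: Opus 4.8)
The plan is to exploit the gradient structure established in Theorem~\ref{corograd}, which says that any $f \in LS(\Rr^n)$ is $f = \nabla \bar{g}$ for the potential $\bar{g}(x) = g(x_1 - x_n, \ldots, x_{n-1} - x_n) + k x_n$. The natural strategy is LaSalle's invariance principle: find a function that is monotone along trajectories, so that limit sets are forced into the set where the monotonicity degenerates, which here will be exactly the equilibrium set. First I would compute the derivative of $\bar{g}$ along a solution $x(t)$ of $\dot{x} = f(x)$. By the chain rule, $\frac{d}{dt}\bar{g}(x(t)) = \nabla \bar{g}(x(t)) \cdot \dot{x}(t) = f(x(t)) \cdot f(x(t)) = \|f(x(t))\|^2 \geq 0$, so $\bar{g}$ is nondecreasing along trajectories, and its derivative vanishes precisely when $f(x) = 0$, i.e.\ at equilibria.

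The second step is to pass from this monotonicity to a statement about $\alpha$- and $\omega$-limit sets. I would take a point $y$ in the $\omega$-limit set $\omega(x_0)$ of a trajectory. Since $\bar{g}$ is continuous and nondecreasing along the trajectory, it converges to some limit value $L$ (assuming the trajectory is bounded, so that $\omega(x_0)$ is nonempty and compact and $\bar g$ is bounded on it); hence $\bar{g} \equiv L$ on $\omega(x_0)$. Because $\omega$-limit sets are invariant under the flow, the solution through $y$ stays in $\omega(x_0)$, where $\bar{g}$ is constant, so $\frac{d}{dt}\bar{g} = \|f(y)\|^2 = 0$ along it, forcing $f(y) = 0$. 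Thus $y$ is an equilibrium. The argument for the $\alpha$-limit set is identical, using that $\bar{g}$ is nonincreasing as $t \to -\infty$ and constant on $\alpha(x_0)$.

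The main obstacle I anticipate is the boundedness/compactness hypothesis: LaSalle's principle requires the relevant limit set to be a nonempty compact invariant set, which is guaranteed when the trajectory is bounded (equivalently contained in a compact sublevel or superlevel set of $\bar g$). On $\Rr^n$ the potential $\bar{g}$ need not be coercive, so a trajectory could in principle escape to infinity and have empty limit sets; I would therefore either state the conclusion for trajectories with nonempty (compact) limit sets, or note that the statement is vacuously consistent when the limit set is empty. For the compact state spaces relevant to the Kuramoto examples, such as the torus $\mathbb{T}^n$ used in Subsection~\ref{subsec:G6}, this difficulty disappears since every trajectory is automatically bounded and both limit sets are nonempty, compact, and invariant, so the conclusion holds unconditionally there.
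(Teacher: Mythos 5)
Your proposal is correct and follows essentially the same route as the paper, which simply invokes Theorem~\ref{corograd} (the gradient structure) and then cites LaSalle's invariance principle; you have merely unpacked the standard LaSalle argument that the citation stands for. Your remark about boundedness of trajectories (limit sets may be empty on $\Rr^n$ since $\bar{g}$ need not be coercive) is a legitimate caveat that the paper leaves implicit in its reference to LaSalle, and it is indeed resolved in the compact (torus) settings of the examples.
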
	

\begin{proof}	
From Theorem~\ref{corograd}, $f \in LS(\Rr^n)$ is gradient, so the result follows from LaSalle's invariance principle (see for example \cite[Section 8.3]{Wiggins}). 
\end{proof}

In Subsection~\ref{subsec:G6} we consider the homogeneous Kuramoto network of six cells with coupling  $G_6$ (identical edges connecting nearest and next nearest neighbors). We list the critical points inside the remaining synchrony subspaces. For each case, we give the estimate for the degree of linear instability through the range of the number of  positive eigenvalues. We end with Subsection~\ref{subsec:two types of edges} doing the same 
analysis for a homogeneous coupled network of six cells with two types of couplings.

\subsection{Total synchrony subspace} \label{subsec:total synchronies}

Let $\CG$ be a connected homogeneous graph
with $n$ cells. 
Let $f$ be an admissible  vector field as in \eqref{eq3} . We search for  critical points, so $k=0$. \\

Suppose that there exists $\varepsilon>0$ such that \begin{eqnarray} \alpha\phi_{[(c,d)]}(\alpha)>0, \forall  \alpha \in (-\varepsilon,\varepsilon),\alpha\neq  0. \label{condition on phi} \end{eqnarray}
Notice that this holds if, in particular, $\phi'_{[(c,d)]}(0)>0$, for all  $(c,d) \in \CE$, which can be the case if the primitive of $\phi_{[(c,d)]}$ has a local minimum at zero. \\

Let $\Omega$ be the biggest open hipercilinder around of the total synchrony subspace $\Delta$ such that $x \in \Omega$ implies that $|x_d-x_c|< \varepsilon$, for all  $(c,d) \in \CE$.  We then have the following:   
 
 \begin{lema} \label{lemaOmega}
\begin{itemize} \item [(i)] $\Omega$ is a flow invariant set.
	 \item [(ii)] for $x \in \Omega$,  we have $f(x)=0$ if, and only if,  $x \in \Delta$.
	 \end{itemize}
 \end{lema}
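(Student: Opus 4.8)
The plan is to reduce both items to the behaviour of the single scalar $V(x)=\sum_{c=1}^{n}(x_c-\bar x)^2$, where $\bar x=\frac1n\sum_c x_c$. Since $V(x)=\operatorname{dist}(x,\Delta)^2$, the cylinder $\Omega$ is a sublevel set $\{V<r^2\}$ and $V$ quantifies the departure from total synchrony. First I would record two consequences of the additive form \eqref{eq3} with $k=0$ together with the oddness of the coupling functions (Theorem~\ref{them:additive component}): pairing each arrow $(c,d)$ with its reverse $(d,c)$—legitimate because the graph is bidirected and $\phi_{[(c,d)]}=\phi_{[(d,c)]}$—yields $\sum_c f_c\equiv 0$ (this is Corollary~\ref{corosumzero}, whose constant vanishes because $k=0$). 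Hence $\dot{\bar x}=0$, so every trajectory is frozen in the $\Delta$-direction.

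Differentiating $V$ along the flow and using $\dot{\bar x}=0$ gives $\dot V=2\sum_c x_c f_c(x)$; substituting \eqref{eq3} and symmetrizing over reversed arrows collapses this to
\[ \dot V = -\sum_{(c,d)\in\CE}(x_d-x_c)\,\phi_{[(c,d)]}(x_d-x_c). \]
On $\Omega$ every edge difference satisfies $|x_d-x_c|<\varepsilon$, so by \eqref{condition on phi} each summand is nonnegative; thus $\dot V\le 0$ throughout $\Omega$, with equality precisely when $x_d=x_c$ for every arrow.

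For item (i) I would take $\Omega=\{V<r^2\}$ with $r$ the largest radius keeping the tube inside $\{\,|x_d-x_c|<\varepsilon,\ \forall (c,d)\in\CE\,\}$. If a trajectory starts in $\Omega$, let $[0,T)$ be the maximal interval on which it remains there; on it $\dot V\le 0$, so $V(x(t))\le V(x(0))<r^2$. Since $\bar x$ is constant and the transverse part stays bounded, the solution cannot escape to infinity, and continuity forces $V(x(T))<r^2$, i.e. $x(T)\in\Omega$, contradicting maximality unless $T=\infty$; hence $\Omega$ is (positively) invariant. For item (ii), the implication $x\in\Delta\Rightarrow f(x)=0$ is immediate, since all differences vanish, $\phi(0)=0$ by oddness, and $k=0$. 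Conversely, if $x\in\Omega$ and $f(x)=0$, then $\dot V(x)=\langle\nabla V(x),f(x)\rangle=0$, so the displayed identity forces $x_d=x_c$ on every arrow, and connectedness of $\CG$ propagates this equality to all cells, giving $x\in\Delta$.

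The conceptual core is the sign identity for $\dot V$; the parts needing care are structural. The hard point is justifying the arrow-pairing cancellations, which is exactly where bidirectedness and the shared edge-type function enter, and, for (i), running the maximal-interval/no-escape argument carefully, since only the non-strict bound $\dot V\le 0$ is available and it must be shown to genuinely forbid reaching $\partial\Omega$. An equivalent, more elementary route to (ii) is a discrete maximum principle: at a cell $c^{*}$ with $x_{c^{*}}=\max_c x_c$ every term $\phi_{[(c^{*},d)]}(x_d-x_{c^{*}})$ is $\le 0$, so $f_{c^{*}}(x)=0$ forces all neighbours of $c^{*}$ to share its value, and connectedness finishes the argument.
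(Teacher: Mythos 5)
Your proposal is correct and takes essentially the same approach as the paper: your identity $\dot V = -\sum_{(c,d)\in\CE}(x_d-x_c)\,\phi_{[(c,d)]}(x_d-x_c)\le 0$ is exactly the paper's key computation $f(x)\cdot x\le 0$ (the paper restricts to the hyperplane $x\cdot(1,\ldots,1)=0$ and shows the flow points strictly inward on the spheres of $\Omega$, which is the geometric form of your Lyapunov-function argument), and both proofs obtain (ii) by using the sign condition \eqref{condition on phi} together with connectedness of $\CG$. The only cosmetic difference is that the paper uses the translation invariance of $f$ along $\Delta$ to reduce to that slice, whereas you work directly with $V=\operatorname{dist}(\cdot,\Delta)^2$ and a maximal-interval argument.
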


Before proving this lemma, we state the following theorem, which is an immediate consequence of this Lemma, since $f$ is gradient (see Corollary \ref{coroadd}).

\begin{teo} \label{maintheo2}
Let $\CG$ be a connected homogeneous graph.
Let $f$ be an admissible vector field as in \eqref{eq3} such that \eqref{condition on phi} holds. The total synchrony subspace is asymptotically stable on $\Omega$ in the sense of Lyapunov. 
\end{teo}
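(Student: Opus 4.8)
The plan is to exhibit the potential $g$ from Corollary~\ref{coroadd} as a Lyapunov function for the total synchrony subspace $\Delta$ on the invariant cylinder $\Omega$. By Corollary~\ref{coroadd}, since we are searching for critical points (so all the constants $k_{[c]}$ vanish), the field is a genuine gradient descent $\dot{x}=f(x)=-\nabla g(x)$ with $g(x)=\sum_{(c,d)\in\CE}\psi_{[(c,d)]}(x_d-x_c)$, the $\psi_{[(c,d)]}$ being even primitives of the odd coupling functions. I would normalize by setting $V:=g-g|_{\Delta}$, noting that $g$ is constant on $\Delta$ because every edge difference $x_d-x_c$ vanishes there.

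First I would establish that $V$ is positive definite relative to $\Delta$ on $\Omega$. Condition \eqref{condition on phi}, namely $\alpha\,\phi_{[(c,d)]}(\alpha)>0$ for $0<|\alpha|<\varepsilon$, says exactly that each even primitive $\psi_{[(c,d)]}$ is strictly decreasing on $(-\varepsilon,0)$ and strictly increasing on $(0,\varepsilon)$, hence has a strict local minimum at $0$. Since on $\Omega$ every edge difference lies in $(-\varepsilon,\varepsilon)$, each summand of $V$ is nonnegative and vanishes only when its argument is zero; therefore $V(x)\ge 0$ on $\Omega$, with $V(x)=0$ iff $x_d=x_c$ for all edges $(c,d)$. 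Here connectivity of $\CG$ enters: on a connected graph all edge differences vanishing forces all coordinates to coincide, so $V(x)=0$ iff $x\in\Delta$, i.e. $V>0$ on $\Omega\setminus\Delta$.

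Next I would differentiate $V$ along trajectories. Because $f=-\nabla g$, the chain rule gives $\dot{V}=\langle\nabla g,\dot{x}\rangle=-\lVert\nabla g\rVert^2=-\lVert f\rVert^2\le 0$, and by Lemma~\ref{lemaOmega}(ii) the equality $\dot{V}=0$ holds on $\Omega$ precisely on $\Delta$. Together with flow invariance of $\Omega$ from Lemma~\ref{lemaOmega}(i), this makes $V$ a strict Lyapunov function for $\Delta$ on $\Omega$, yielding Lyapunov stability at once. For the asymptotic part I would invoke LaSalle's invariance principle: the $\omega$-limit of any trajectory in $\Omega$ lies in the largest invariant subset of $\{\dot{V}=0\}=\Delta$, which is $\Delta$ itself since $\Delta$ consists of equilibria. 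To make LaSalle applicable I would first note, via Corollary~\ref{corosumzero} with $k=0$, that $\sum_c x_c$ is conserved, so each trajectory is confined to an affine hyperplane $\{\sum_c x_c=s\}$; on a connected graph, bounded edge differences together with a fixed coordinate sum force all coordinates to be bounded, so small sublevel sets of $V$ inside $\Omega$ are compact and positively invariant.

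The main obstacle I anticipate is the positive-definiteness step: one must pass from the pointwise sign condition \eqref{condition on phi} on the couplings to a strict transverse minimum of the global potential $g$ along the entire line $\Delta$, and this is exactly where both the attractivity hypothesis \eqref{condition on phi} and the connectivity of $\CG$ are indispensable, since dropping either would allow spurious zeros of $V$ off $\Delta$. A secondary subtlety is that $\Delta$ is a continuum of equilibria rather than an isolated point, so \emph{asymptotic stability} must be read set-wise; the conserved sum $\sum_c x_c$ resolves this cleanly by isolating, within each invariant hyperplane, the single synchronous equilibrium to which the trajectory converges.
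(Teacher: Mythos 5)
Your proposal is correct, but it runs on a different Lyapunov function than the paper's argument. The paper puts all the analytic content into its proof of Lemma~\ref{lemaOmega}: after using Corollary~\ref{corosumzero} to note that $f$ is tangent to the hyperplanes $\sum_c x_c = {\rm const}$, it shows
\[
f(x)\cdot x \;=\; \sum_{(c,d)\in\CE}(x_c-x_d)\,\phi_{[(c,d)]}(x_d-x_c)\;\le\; 0
\]
on $\Omega$, strictly negative off $\Delta$ by \eqref{condition on phi} and connectivity; that is, the squared Euclidean distance to $\Delta$ within each invariant slice serves as the (strict) Lyapunov function, and this single pointwise inequality simultaneously yields invariance of the cylinder $\Omega$ and the identification of the equilibria, after which the theorem is declared immediate from the gradient structure. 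You instead take the potential $V=g-g|_{\Delta}$ from Corollary~\ref{coroadd}, prove it is positive definite relative to $\Delta$ because \eqref{condition on phi} forces each even primitive $\psi_{[(c,d)]}$ to have a strict minimum at $0$ (your sign bookkeeping with $f=-\nabla g$ is right) and connectivity rules out spurious zeros, compute $\dot V=-\lVert f\rVert^2\le 0$, and get attractivity from LaSalle on compact sublevel sets. The two routes share the same skeleton --- both exploit the conserved sum $\sum_c x_c$ to collapse the line of equilibria to a single point per invariant affine hyperplane, and you still lean on Lemma~\ref{lemaOmega} for invariance of $\Omega$ and the equilibria characterization --- but the key inequality differs: the paper's inequality $(x_c-x_d)\phi_{[(c,d)]}(x_d-x_c)\le 0$ needs no antiderivative and gives monotone decay of the actual distance to $\Delta$, whereas your energy argument is the canonical one for gradient systems, exhibits $\Delta$ as a strict transverse minimum manifold of the potential (the variational picture behind the stability), and spells out the compactness and set-stability details that the paper's one-line deduction leaves implicit. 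The only step you gloss over is why small sublevel sets of $V$ stay away from $\partial\Omega$ (this needs the strict monotonicity of $\psi_{[(c,d)]}$ on each side of $0$), but that is a routine verification, not a gap.
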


\begin{proof}[ Proof of Lemma \ref{lemaOmega}]
(i) From Corollary~\ref{corosumzero}, $f(x) \cdot  (1, \ldots,1)  =0$ for any $x$. Also, $f$ is constant along the lines parallel to $\Delta$. So it suffices to show that $\Omega \cap \pi$ is flow invariant, where $\pi$ is the hiperplane $ x \cdot (1, \ldots,1)  =0$. To see this, as $\Omega \cap \pi$ is a hipersphere, we just need to show that $f(x) \cdot x < 0$  for all  $x \in \Omega \cap \pi \backslash \{0\}$. In fact, 
\begin{eqnarray}
f(x) \cdot x &=& \sum_{c \in \CC}  \sum_{  d \in I(c)} x_c\phi_{[(c,d)]}(x_d-x_c) \nonumber \\ & =& \sum_{(c,d) \in \CE} (x_c-x_d)\phi_{[(c,d)]}(x_d-x_c)\leq 0, \label{eq06} 
\end{eqnarray}
where at least one portion is strictly less than zero. 
As $\CG$ is connect, \eqref{eq06} also shows (ii). \end{proof}

We finally mention that if all the functions $\phi_{[(c,d)]}$ are periodic with a comum period, we can restrict the analysis of critical points on the $n$-torus.

\subsection{Kuramoto network with  $G_6$ coupling} \label{subsec:G6}

We consider the Kuramoto network system with six cells and coupling 
 $G_6$ (Fig.~\ref{fig1}):
\begin{eqnarray}
\dot{x}_i=\sum_{j=-2}^2 \sin({x_{i-j}-x_i}), \qquad i=1, \ldots,6. \label{eq: kuramoto G6}
\end{eqnarray}

\begin{figure}[h]
	\centering 
	\includegraphics[width=4.2cm]{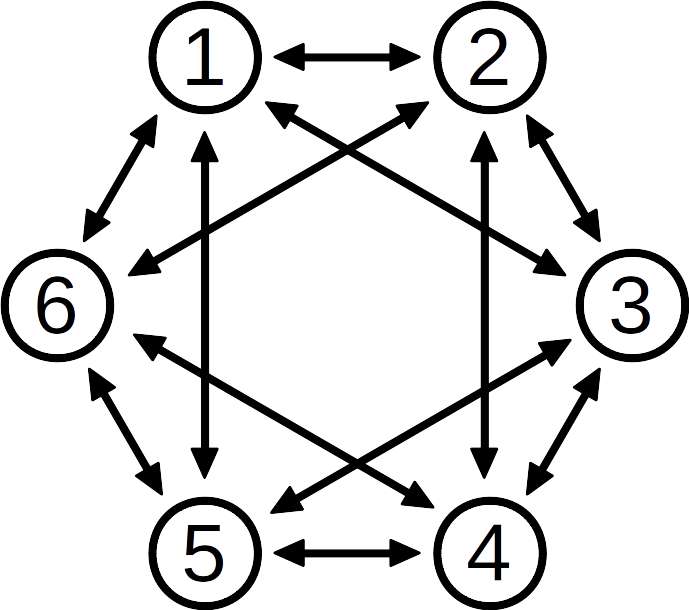} 
	
	\caption{$G_6$} \label{fig1}
\end{figure}

We search for the critical points inside synchrony subspaces. It follows from Proposition~\ref{prop:Gm}  that these  are all fixed-point subspaces of subgroups of the automorphism group $Aut(G_6)$, which is the octahedral group. We consider the representation given by permutations,   
$$Aut(G_6)=\langle \mathbb{D}_6,(14),(25)\rangle $$ 
(see \cite[Lemma 2.1]{AntoneliStewart}).

The computation has been carried out starting with the smallest subgroups $\Sigma$ of $Aut(G_6)$ up to conjugacy, for these will have the largest fixed-point subspaces; and we then continue refining from there. More precisely, we consider the diagram (\ref{eq:diagram}) with the lattice of fixed-point subspaces, where the arrows represent the refinement by inclusion. The total synchrony has been omitted in this diagram and, apart from it, this is the complete lattice up to conjugacy. 
Based on this diagram, we start by considering the subgroups $\Sigma$ such that Fix($\Sigma$) correspond to patterns 1 and 3.

\begin{eqnarray}  \label{eq:diagram}
\xymatrix{ \overset{Pattern \, 8}{{\rm Fix}\langle (14)(25)(36) \rangle} \ar[r] &\overset{Pattern \,4}{{\rm Fix}\langle (14)(25) \rangle} \ar[r] & \overset{Pattern \, 1}{{\rm Fix}\langle (14) \rangle}   \\ \overset{Pattern \,5}{{\rm Fix}\langle (1245)(36) \rangle} \ar[r] \ar[u] \ar[d] &\overset{Pattern \,2}{{\rm Fix}\langle (1245) \rangle} \ar[ru] \ar[u] \ar[d] & \\ \overset{Pattern \,7}{{\rm Fix}\langle (12)(36)(45) \rangle}\ar[r] & \overset{Pattern \, 3}{{\rm Fix}\langle (12)(45) \rangle} & \\ \overset{Pattern \,6}{{\rm Fix}\langle (123)(456) \rangle} \ar[ru] & & } \end{eqnarray}

Without loss of generality, for all cases we assume that a critical point $x = (x_1, \ldots, x_6)$ satisfies $x_1 = 0$. \\

For $\Sigma=\langle (14) \rangle$,  we search for critical points of the form $(0,a,b,0,c,d)$ for $a, b, c, d \in {\mathbb R}$  mod $2\pi$: 
\begin{eqnarray}
2\sin(-a)+\sin(b-a)+\sin(d-a)=0  \nonumber \\ 
2\sin(-b)+\sin(a-b)+\sin(c-b)=0 \nonumber \\ 
2\sin(-c)+\sin(b-c)+\sin(d-c)=0 \nonumber \\ 
2\sin(-d)+\sin(a-d)+\sin(c-d)=0, \nonumber
\end{eqnarray}
which can be rewritten as  
\begin{eqnarray}
2\left[ \begin{array}{c} \sin a \\ \sin c \end{array}\right]= (\sin b+\sin d) \left[ \begin{array}{c} \cos a \\ \cos c \end{array}\right] -(\cos b +\cos d)\left[ \begin{array}{c} \sin a \\ \sin c \end{array}\right] \label{eq: G6 5} \\
2\left[ \begin{array}{c} \sin b \\ \sin d \end{array}\right]= (\sin a+\sin c)\left[ \begin{array}{c} \cos b \\ \cos d \end{array}\right] -(\cos a+\cos c)\left[ \begin{array}{c} \sin b \\ \sin d \end{array}\right] \label{eq: G6 6}
\end{eqnarray}
\noindent Case 1: If  $(\sin a, \sin c) , (\cos a, \cos c)$  are linearly independent,  then \eqref{eq: G6 5} implies that $\cos b+\cos d=-2$, and so $b=d=\pi$. Using that in \eqref{eq: G6 6}, we get  $\sin a= - \sin c$, and so $c=-a$ or $a=c+\pi$. These give the following two families of critical points:
$$(0,a,\pi,0,-a,\pi), \ \  (0,a,\pi,0,a+\pi,\pi).$$
If  $(\sin b, \sin d) , (\cos b, \cos d)$  are linearly independent, then analogously we get the other two families
$$(0,\pi,b,0,\pi,-b),  \ \  (0,\pi,b,0,\pi,b+\pi).$$

\noindent Case 2: If both  pairs of the previous case are linearly dependent, then each pair form a matrix with zero  determinant, and so  $\sin(c-a)=0$ and $\sin(d-b)=0$. Hence, $c=a$ or $c=a+\pi$ and $b=d$ or $b=d+\pi$. 
\begin{itemize} 
\item If $c=a$ and $d=b$, then  $\sin a=\sin(b-a)$ and $\sin b=\sin(a-b)$. So the critical points are $$(0,\pi,0,0,\pi,0), \ \ (0, \dfrac{4\pi}{3},\dfrac{2\pi}{3},0,\dfrac{4\pi}{3},\dfrac{2\pi}{3},0).$$
\item If $c=a$ and $d=b+ \pi$, then the critical points are $$(0,0,\pi,0,0,0), \ \ (0, \pi,b,0,\pi,b+\pi).$$
\end{itemize}

For $\Sigma=\langle (12)(45) \rangle$, we search for critical points of the form $(0,0,a,b,b,c)$, for $a, b, c \in {\mathbb R}$ mod $ 2\pi$: 
\begin{eqnarray}
2\sin(-a)+2\sin(b-a)=0  \nonumber\\
\sin(-b)+\sin(a-b)+\sin(c-b)=0 \nonumber \\
2\sin(-c)+2\sin(b-c)=0. \nonumber
\end{eqnarray}
These give the following two families of critical points:
$$ (0,0,a,\pi,\pi,-a), \ \ (0,0,a,\pi,\pi,a+\pi).$$

In \cite{AM} we have listed the eight  patterns of synchrony of $G_6$ that are distinct from the total synchrony pattern; see \cite[Table1]{AM}. Here we follow that order to present in Table~1 the compilation of all (families of) equilibrium points inside the corresponding synchrony pattern.  \\

We then apply Theorem~\ref{Bronskiteo} to the Jacobian matrix $Jf(x)$, at every critical point $x$, thought of as a weighted Laplacian of the graph. We refer to the beginning of Subsection~\ref{subsec:Laplacian maps} for the explanation.  

For the critical point $x = (0,0,{\pi}/{2},\pi,\pi,{3\pi}/{2})$, the Jacobian is 

$$ Jf(x) \ = \ \left( \begin{array}{rrrrrr} 0 & +1 & 0 &0 &-1 & 0 \\ +1 & 0 & 0 &-1 & 0 & 0 \\ 0 & 0 & 0 &0 &0 & 0  \\ 0 & -1 & 0 &0 &+1 & 0 \\ -1 & 0 & 0 & +1 &0 & 0 \\ 0 & 0 & 0 &0 &0 & 0 \end{array} \right).$$ 
Thought of as a graph Laplacian, the graph is as given in Fig.~\ref{fig:disconneted}.  We then have $c(G) = 3,$ $c(G_+) = 4,$  $c(G_{-}) = 4,$ which gives $1 \leq  n_+  \leq  2.$ This is one of the possible cases of pattern number 3 of Table~1. 

\begin{figure}[h]
	\centering 
	\includegraphics[width=4.2cm]{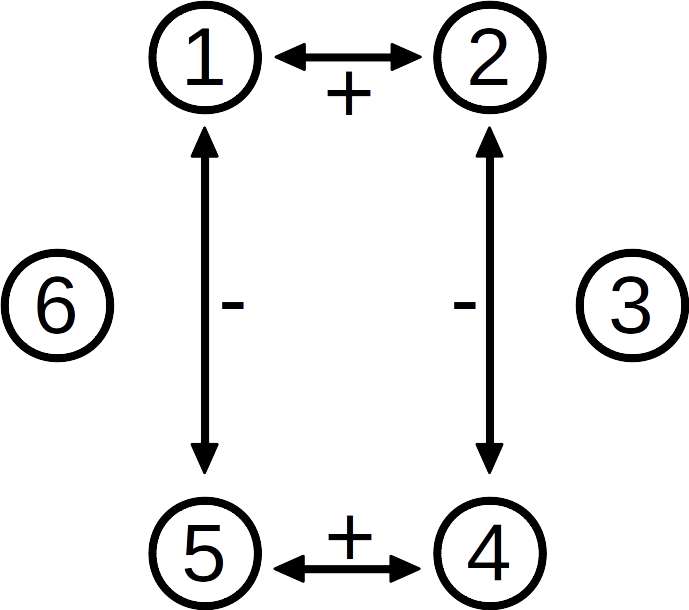} 
	
	\caption{The disconnected graph obtained from $G_6$ by assigning $Jf(x)$ as the weighted Laplacian, for $x = (0,0,{\pi}/{2},\pi,\pi,{3\pi}/{2})$. } \label{fig:disconneted}
\end{figure}
 It follows that for all cases in Table~1 we have $n_{+} \geq 1$. Therefore, all equilibria are unstable, and  asymptotic stability occurs only for the total synchrony subspace in the sense of Lyapunov as given in the previous subsection.
 
\begin{table}[p]
\centering 
\begin{tabular}{|c m{2.5cm}|m{4cm}|c|c|c|c|} 
\hline
 $\sharp$ & \ \ \ \  Pattern & \ \ \ \ \  Critical point &  $c(G_+)$ & $c(G_-)$ & $c(G)$ & $n_+({\cal L})$ 
 \\
 & of synchrony & \ \ \ \ \  representative & & & & \\ \hline 
 1 & {\includegraphics[scale=0.1]{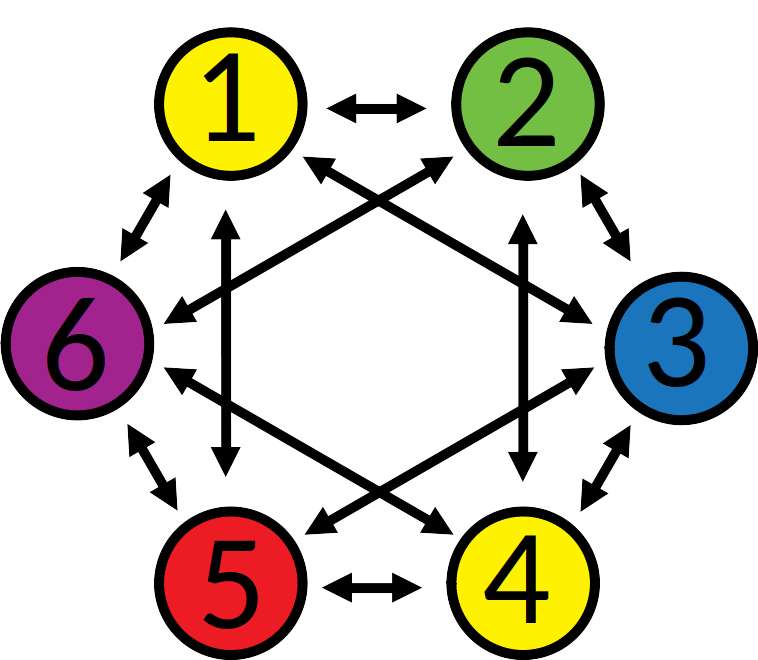}} 
&  \ \ \ \   \ \ \ \   \ \ \ \   \ \ -  &  - & - & - & -  \\
 \hline
 2 & {\includegraphics[scale=0.1]{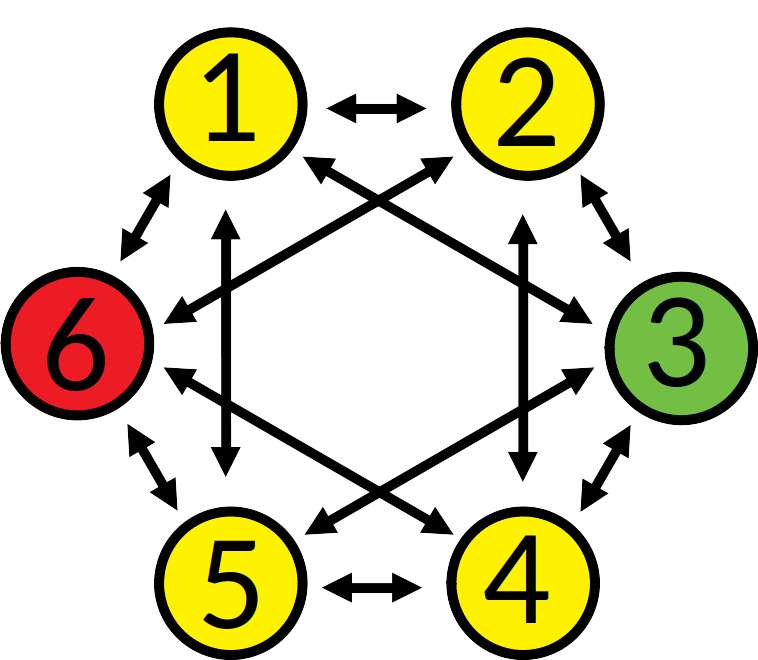}} &  $(0,0,\pi,0,0,0)$ &  2 & 2 & 1 & $1 \leq n_+ \leq 4$ 
  \\ \hline 3 & \multirow{3}{*} {\includegraphics[scale=0.1]{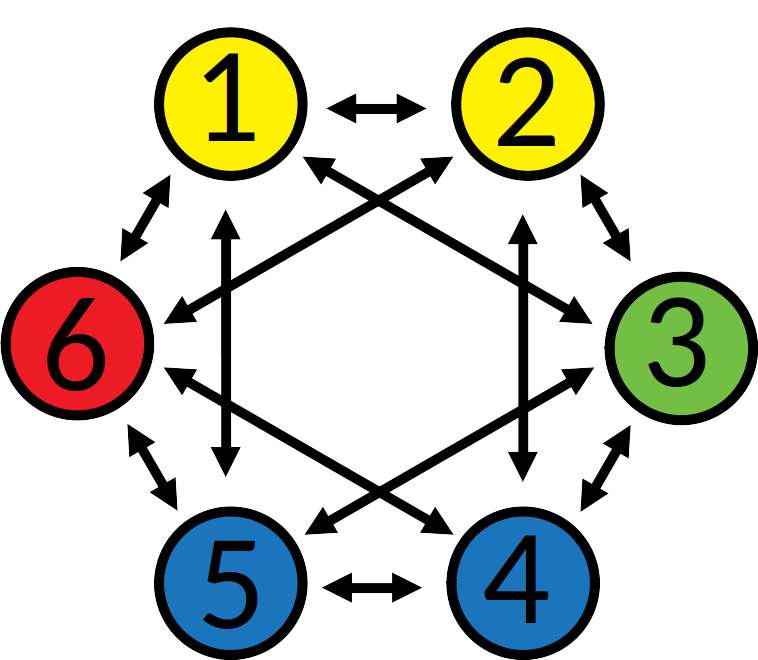}} &  $(0,0,\alpha, \pi,\pi,-\alpha)$   $(0,0,\alpha, \pi,\pi,\alpha+\pi)$  $\alpha \neq0,\pi, \pm \frac{\pi}{2}$&  2 & 1 & 1 & $1 \leq n_+ \leq 5 $ \\
 \cline{3-7}
 & &  $(0,0,\pm \frac{\pi}{2}, \pi,\pi,\mp \frac{\pi}{2})$  $(0,0,\pm \frac{\pi}{2}, \pi,\pi,\pm \frac{\pi}{2}+\pi)$ &  4 & 4 & 3 & $1 \leq n_+ \leq 2 $ \\&  & & & & & \\
 \hline
 4 & \multirow{3}{*}{\includegraphics[scale=0.1]{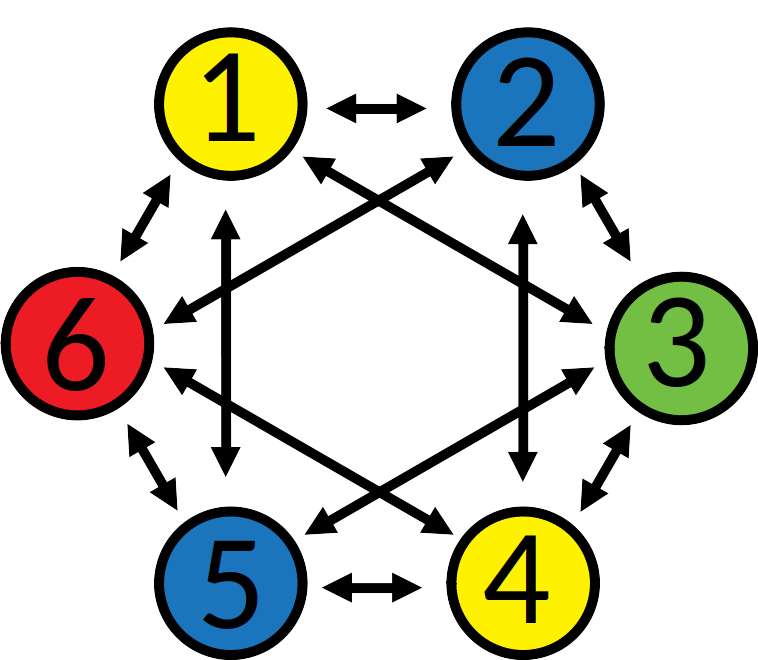}} &  $(0,\pi,\alpha,0 ,\pi,-\alpha)$ $(0,\pi,\alpha, 0,\pi,\alpha+\pi)$   $\alpha \neq0,\pi, \pm \frac{\pi}{2}$&  2 & 1 & 1 & $1 \leq n_+ \leq 5 $ \\
 \cline{3-7}
& &  $(0,\pi,\pm \frac{\pi}{2}, 0,\pi,\mp \frac{\pi}{2})$   $(0,\pi,\pm \frac{\pi}{2}, 0,\pi,\pm \frac{\pi}{2}+\pi)$ &  6 & 3 & 3 & $3 \leq n_+ \leq 3 $ \\
 \cline{3-7}
  & &  $(0,\pi,\pi,0,\pi,0)$ &  2 & 2 & 1 & $1 \leq n_+ \leq 4$ \\
 \hline
5 &  {\includegraphics[scale=0.1]{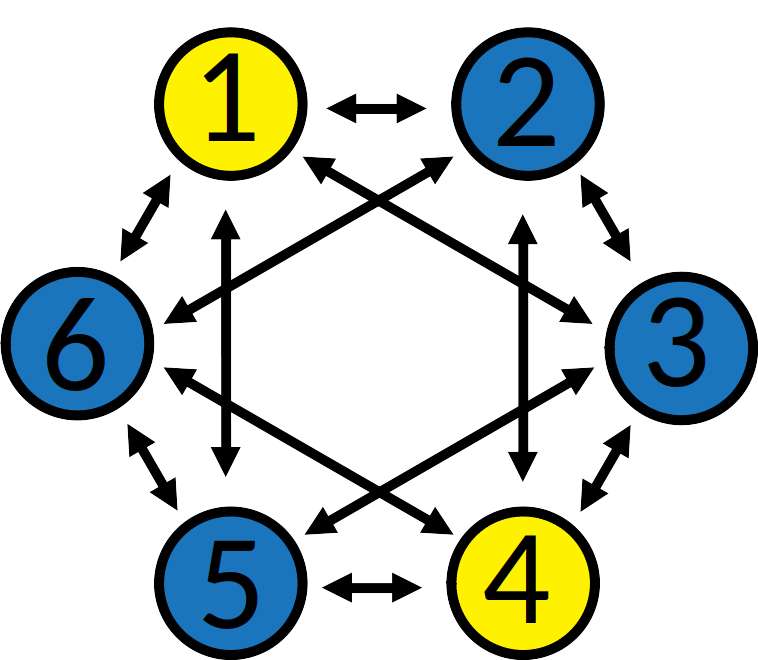}} &  $(0,\pi,0,0,\pi,0)$ &  3 & 1 & 1 & $2 \leq n_+ \leq 5$ \\
 \hline
 6 & {\includegraphics[scale=0.1]{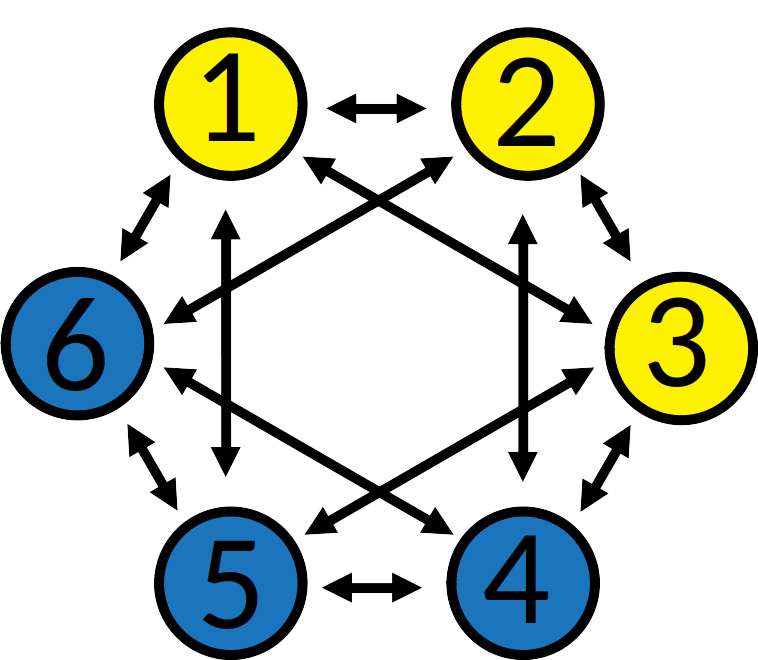}} &  $(0,0,0,\pi,\pi,\pi)$ &  2 & 1 & 1 & $1 \leq n_+ \leq 5$ \\
 \hline
 7 & {\includegraphics[scale=0.1]{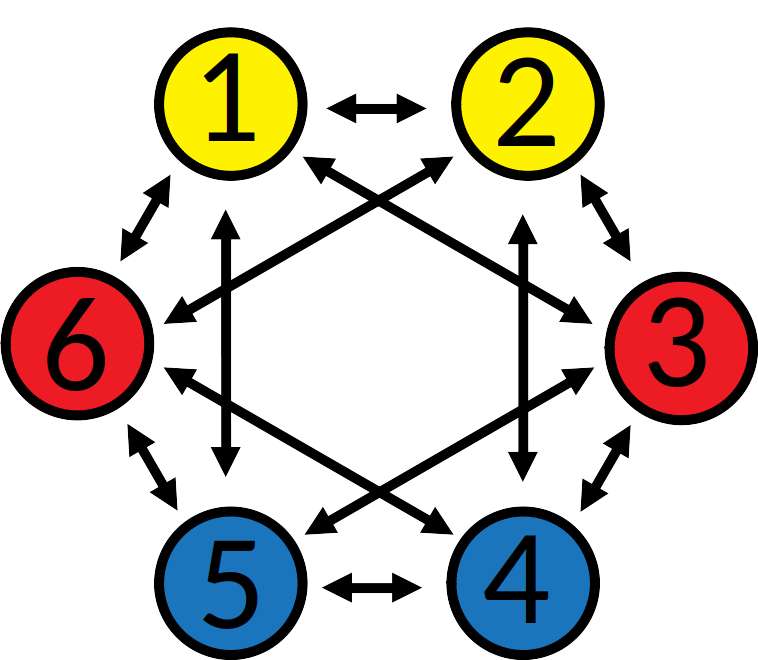}} &  $(0,0,0,\pi,\pi,0)$ &  2 & 1 & 1 & $1 \leq n_+ \leq 5$ \\
 \hline
 8 & {\includegraphics[scale=0.1]{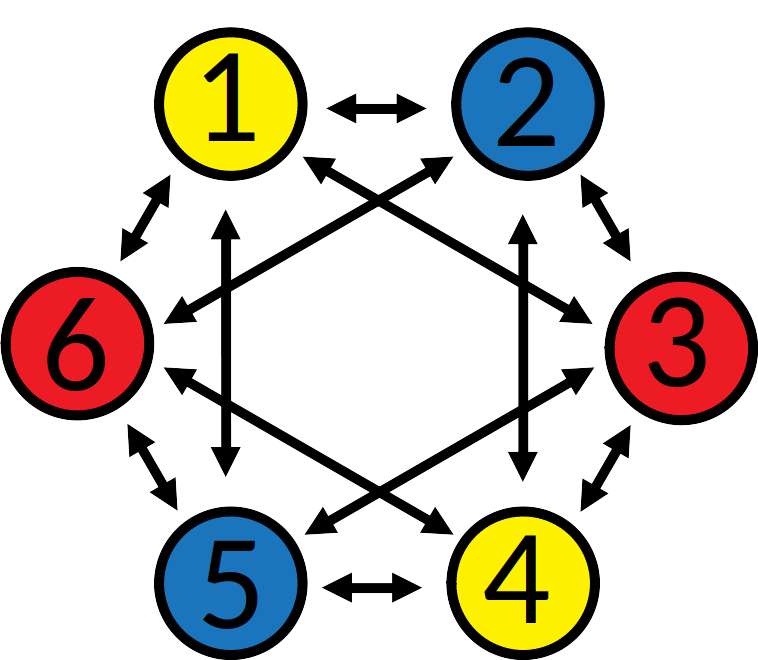}} &  $(0,\frac{4\pi}{3},\frac{2\pi}{3},0,\frac{4\pi}{3},\frac{2\pi}{3})$ &  6 & 1 & 1 & $5 \leq n_+ \leq 5$ \\
 \hline
\end{tabular}
\caption{The first column shows the synchrony patterns extracted from \cite[Table 1]{AM}; the corresponding class of critical points is given in the second column by a representative up to the symmetries of $Aut(G_6)$; the last column gives the number $n_{+}$ of positive eigenvalues of the Jacobian at the corresponding critical point; the remaining three columns are the data used to apply Theorem~\ref{Bronskiteo}. }
\end{table}

\subsection{A homogeneous network $\tilde{G}_6$ with two types of couplings} \label{subsec:two types of edges}

The aim of this example is to illustrate that there are stable equilibria from a `modification' of $G_6$. We consider a network system with six cells and coupling 
 defined from the graph $\tilde{G}_6$ given in Fig.~\ref{fig3}. The symmetry group of this graph is
 \[ Aut(\tilde{G}_6) \ = \ {\mathbb Z}_2^1 \times {\mathbb Z}_2^2 \times {\mathbb Z}_3,  \]
where  ${\mathbb Z}_2^1 = \langle (15)(24) \rangle$ and ${\mathbb Z}_2^2 = \langle(12)(36)(45) \rangle$, which are not conjugate, and 
${\mathbb Z}_3 = \langle (156)(234) \rangle.$ \\
 
 We choose the coupling functions  $\phi (\theta) =\sin \theta $,  which is represented by the continuous arrow type in Fig.~\ref{fig3}, and $\psi(\theta) = \theta$, represented  by the dashed arrow type, so the network system is  \\
\[ \begin{array}{ll} 
\dot{x}_1 = & \sin(x_2 - x_1) + \sin(x_3 - x_1) + x_5 + x_6 - 2x_1\\
\dot{x}_2 = & \sin(x_1 - x_2) + \sin(x_6 - x_2) + x_3 + x_4- 2x_2\\
\dot{x}_3 = & \sin(x_1 - x_3) + \sin(x_5 - x_3) + x_2 + x_4- 2x_3\\
\dot{x}_4 = & \sin(x_5 - x_4) + \sin(x_6 - x_4) + x_2 + x_3- 2x_4\\
\dot{x}_5 = & \sin(x_3 - x_5) + \sin(x_4 - x_5) + x_1 + x_6- 2x_5\\
\dot{x}_6 = & \sin(x_2 - x_6) + \sin(x_4 - x_6) + x_1 + x_5- 2x_6\\
\end{array} \]

\begin{figure}[h]

	\centering 
	\includegraphics[width=4.2cm]{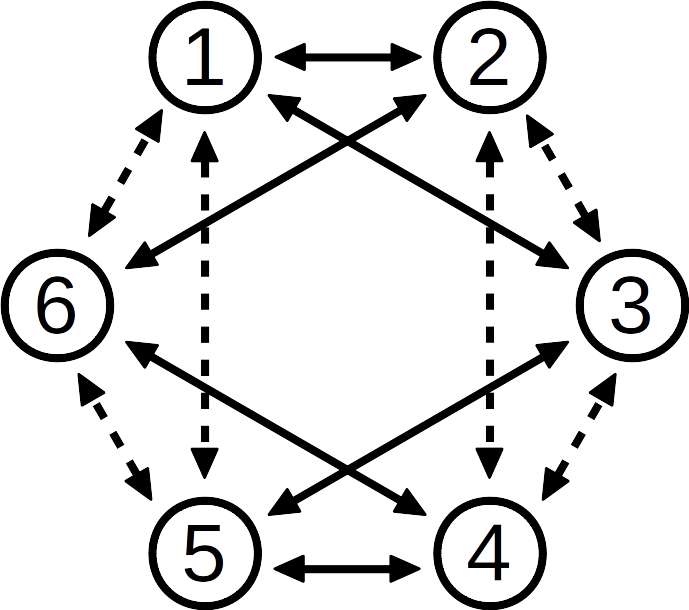} 
	\caption{A homogeneous $\tilde{G}_6$} \label{fig3}
\end{figure}
By direct investigation from the output of our code  (Remark~\ref{rmk: code}), there are no exotic synchrony patterns in this network graph. \\

The analysis follows analogously as in the previous subsection, so we shall just present a summary with the results. The diagram in (\ref{eq:diagram2}) is the lattice of non-trivial synchrony subspace up to conjugacy.

\begin{eqnarray} \label{eq:diagram2} {\xymatrix{ {\rm Fix}\langle(156)(234)   \rangle \ar[r] &  {\rm Fix}\langle(15)(24)   \rangle   \\ {\rm Fix}\langle(12)(36)(45),(15)(24)   \rangle \ar[r] \ar[ur] \ar[dr] & {\rm Fix}\langle (12)(36)(45)    \rangle \\ & {\rm Fix}\langle(14)(25)(36) \rangle }} 
\end{eqnarray}

For $\Sigma=\langle(15)(24)   \rangle$,  we search for critical points of the form $(0,a,b,a,0,c)$, for $a, b, c, \in {\mathbb R}$:  
\begin{eqnarray}
\sin(-a)+\sin(c-a)+(b-a)=0   \nonumber\\ 
2(a-b)+ 2 \sin(-b) = 0 \nonumber \\ 
-2c+\sin(a-c)=0, \nonumber
\end{eqnarray}
which imply that 
$$\sin b +b=a=c+\arcsin c,$$
Since $t \mapsto \sin(t)+t$  is injective, then $c=\sin b$. Together with the first equality, this gives
 $\sin a=-2 \sin b, $ and so  $\sin(b+\sin b) +2\sin b=0$.  Hence, $b=k\pi,k \in \mathbb{Z},$ and also  $a=b$.  
Therefore, the critical points are 
$$(0,k\pi,k\pi,k\pi,0,0), \quad \quad k \in \mathbb{Z}.$$

For $\Sigma=\langle (14)(25)(36) \rangle $,  we search for critical points of the form $(0,a,b,0,a,b)$, $a, b \in {\mathbb R}$: 
\begin{eqnarray}
\sin a+\sin b+a+b=0 \nonumber \\
\sin(b-a)+\sin(-a)-2b+a=0, \nonumber
\end{eqnarray}
which imply that $\sin a +a=\sin(-b)-b$, and then $a=-b$. Together with the second equality, this gives $\sin(2a)+2a=\sin(-a)-a$, and then $a=b=0$. This  gives the total synchrony, which has already been considered. \\

For $\Sigma=\langle (12)(36)(45)    \rangle $,  we search for critical points of the form $(a,a,0,b,b,0)$, $a, b \in {\mathbb R}$: \begin{eqnarray}
b=2a+\sin a \nonumber \\
a=2b+\sin b, \nonumber 
\end{eqnarray}
which also imply $a=b=0$.  
It follows from the computation of $n_{+}$ and $n_-$ by Theorem~\ref{Bronskiteo} that $(0,k\pi,k\pi,k\pi,0,0)$ is a one paramenter family of stable equilibria for $k$ even and unstable if $k$ is odd. \\

\noindent {\bf Acknowledgments.} TAA acknowledges financial support by FAPESP grant number 2019/2130-0. MM acknowledges financial support by FAPESP grant 2019/21181-0.

\bibliographystyle{alpha}

\end{document}